\numberwithin{equation}{section}
\theoremstyle{plain}
\newtheorem{theorem}{Theorem}[section]
\newtheorem{proposition}{Proposition}[section]
\theoremstyle{remark}
\newtheorem{definition}{Definition}[section]
\newtheorem{remark}{Remark}[section]
\newcommand\numberthis{\addtocounter{equation}{1}\tag{\theequation}}
\newcommand{\vertiii}[1]{{\left\vert\kern-0.25ex\left\vert\kern-0.25ex\left\vert #1 
    \right\vert\kern-0.25ex\right\vert\kern-0.25ex\right\vert}}
\DeclareMathOperator*{\esssup}{ess\,sup}
\DeclareMathOperator*{\essinf}{ess\,inf}
\newcommand{\f}{\mathcal{F}}
\newcommand{\g}{\mathcal{G}}
\newcommand{\rr}{\mathbb{R}}
\newcommand{\nn}{\mathbb{N}}
\newcommand{\pp}{\mathcal{P}}
\newcommand{\s}{\mathcal{S}}
\newcommand{\z}{\mathcal{Z}}
\newcommand{\p}{\mathbb{P}}
\newcommand{\q}{\mathbb{Q}}
\newcommand{\U}{U^\alpha(v)}
\newcommand{\Uu}{U^\alpha(\tau)}
\newcommand{\lo}{\mathbb{L}^\alpha_v}
\newcommand\normx[1]{\Vert#1\Vert}
\begin{document}

\begin{frontmatter}
\title{Optimal Stopping Under Model Uncertainty in a General Setting}
\runtitle{Optimal Stopping Under Model Uncertainty in a General Setting}

\begin{aug}
\author[A]{\fnms{Ihsan}~\snm{Arharas}\ead[label=e1]{ihsan.arharas@lnu.se}}, 
\author[B]{\fnms{Siham}~\snm{Bouhadou}\ead[label=e2]{sihambouhadou@gmail.com}}, 
\author[C]{\fnms{Astrid}~\snm{Hilbert}\ead[label=e3]{astrid.hilbert@lnu.se}}
\and
\author[D]{\fnms{Youssef}~\snm{Ouknine}\ead[label=e4]{ouknine@uca.ac.ma}}
\address[A]{Department of Mathematics, Linnaeus University\printead[presep={,\ }]{e1}}
\address[B]{Department of Mathematics, Cadi Ayyad University\printead[presep={,\ }]{e2}}

\address[C]{Department of Mathematics, Linnaeus University\printead[presep={,\ }]{e3}}

\address[D]{Department of Mathematics, Cadi Ayyad University\printead[presep={,\ }]{e4}}

\end{aug}

\begin{abstract}
We consider the optimal stopping time problem under model uncertainty $R(v)=  \esssup\limits_{ \p \in \pp} \esssup\limits_{\tau \in \s_v} E^\p[Y(\tau) \vert \f_v]$, for every stopping time $v$, set in the framework of families of random variables indexed by stopping times. This setting is more general than the classical setup of stochastic processes, and particularly allows for general payoff processes that are not necessarily right-continuous. Under weaker integrability, and regularity assumptions on the reward family $Y=(Y(v), v\in \s)$, we show the existence of an optimal stopping time. We then proceed to find sufficient conditions for the existence of an optimal model. For this purpose, we present a “universal” Doob-Meyer-Mertens's decomposition for the Snell envelope family $R$ associated with $Y$ in the sense that it holds simultaneously for all $\p \in \pp$. This decomposition is then employed to prove the existence of an optimal probability model and study its properties. 
\end{abstract}

\begin{keyword}[class=MSC]
\kwd[Primary ]{60G40}
\kwd{60H30}
\kwd[; secondary ]{60G07}
\end{keyword}

\begin{keyword}
\kwd{optimal stopping}
\kwd{uncertainty} 
\kwd{Snell envelope} 
\kwd{Merten's decomposition} 
\kwd{American options}
\end{keyword}

\end{frontmatter}

\section{Introduction}

Optimal stopping problems are a class of decision-making problems that involve finding the best time to take a particular action. These problems arise in various contexts where we need to make a decision based on uncertain information and limited resources, such as investment, job search, launching a new brand, exercising an option and others. All these problems naturally lead to an optimal stopping problem. Formally, the main target is to choose a random stopping time that maximizes the expected reward induced by the problem's payoff process. 

The classical solution to optimal stopping problems assumes that the reward's probability law is given to the decision maker. The problem is then formulated with respect to a unique probability measure. However, in numerous real-world scenarios, decision-makers contend with uncertainty regarding the true probabilistic model. As a result, the probability law that generates future rewards can be partially or completely unknown, creating what is known as model \textit{uncertainty} or \textit{ambiguity}. This can arise due to various reasons, such as incomplete information, parameter estimations errors, or the presence of hidden variables. An illustrative example is the analysis of American options in incomplete financial markets, where we have to deal with multiple equivalent martingale measures being uncertain about which one underlies the market. In such scenarios, the decision maker rely on a set of plausible probability measures or models, each of which could potentially result in vastly different optimal stopping strategies.

In the recent years, optimal stopping under ambiguity have attracted a lot of attention: \cite{Karat, Follmer,CherDel,Delbaen, Riedel,BayKarYao,BayYaoI,BayYaoII,ChengRiedel,Morlais}. This literature focuses on two types of optimal stopping problems: the worst-case and the best-case optimal stopping. The former$-$the so-called robust optimal stopping$-$maximizes the worst-case expected value: $\inf\limits_{\p \in \pp} E^\p[Y_\tau]$, via the choice of $\tau$; see, e.g. \cite{Riedel,BayYaoI,BayYaoII,BayYao2014
,ChengRiedel,nutzzhang}. The later, on the other hand, maximizes the best-case expected value: $\sup\limits_{\p \in \pp} E^\p[Y_\tau]$; see, e.g. \cite{BayYaoI,BayYaoII,Ekren,Belom,BayYao2017}. 
Here $\pp$ denotes the set of probability measures, and $Y$ represents the payoff process. 
 All the aforementioned works assumed that $\pp$ is dominated by a single probability measure, and $Y$ is an $\mathbb{F}$-adapted RCLL (càdlàg) process. A huge part of the literature focused on weakening the assumptions on the probability class $\pp$. Therefore, weak conditions have been discovered and improved with greater generality in the literature of mathematical finance; see e.g. \cite{Ekren,nutzzhang,nutz,BayYao2014}. However, there has been relatively little work in the literature allowing for more general payoff processes. To the best of our knowledge, the most general result given in the literature is that of Krätschmer \textit{et al.} \cite{Krat}: A numerically implementable method for single stopping problems under uncertainty in drift and jump intensity was proposed. For general reward processes driven by multi-dimensional jump-diffusions, we also refer to Roger \textit{et al.} \cite{Roger}.

 In this paper, we solve an optimal stopping problem under uncertainty with respect to a dominated probability class $\pp$, where the reward is given by a family $Y=(Y(\tau), \tau \in \s)$ of non-negative random variables indexed by stopping times. More precisely, letting $\s$ collect all stopping times with respect to a general filtration $\mathbb{F}$ on $\Omega$. In our main result, Theorem \ref{existence of optimal model}, we construct an optimal pair $(\p^*, \tau^*)\in \pp \times \s$ such that
 \begin{equation} \label{construct}
R \triangleq \sup_{ \p \in \pp} \sup_{\tau \in \s}  E^\p [Y(\tau)]=  \sup_{ \p \in \pp}  E^\p [Y(\tau^*)] = E^{\p^*}  [Y(\tau^*)] \quad \text{a.s.}
 \end{equation}
 The optimal stopping problem \eqref{construct} features generality along three dimensions: (i) the setup of families of random variables indexed by stopping times is more general that the classical setup of processes, therefore \eqref{construct} allows for more general payoff processes that are not necessarily right-continuous, (ii) particularly, for the existence of optimal stopping times, it allows for simpler proofs by using only classical tools of Probability Theory, and (iii) it allows to solve the problem under weaker assumptions than those used in previous literature.

Our technical set-up follows closely that of \cite{kobQuen}, which revisited the classical optimal stopping in the case of a reward given by a family of random variables indexed by stopping times. We extend their framework to account for model uncertainty \eqref{construct}. The key to this study is the generalized Snell envelope family of $Y$:
 \begin{align} \label{s1}
R(v)  \triangleq \esssup_{ \p \in \pp} \esssup_{\tau \in \s_v} E^\p[Y(\tau) \vert \f_v], \quad v\in \s,
 \end{align}
which is characterized as the smallest $\pp$-supermartingale family which is greater than the reward family $Y$. Within this setting, a family of random variables indexed by stopping times is called a $\pp$-supermartingale family if it is a $\p$-supermatingale family with respect to each measure $\p\in \pp$. (Please refer to Sections \ref{s2} for the notations). The properties of this generalized Snell envelope family follows naturally from the extant theory of optimal stopping in \cite{kobQuen}. In the classical literature, optimal stopping under uncertainty is typically formulated within the framework of processes. The reward is determined by a right-continuous, adapted process $(Y_t)$. The value function family $\lbrace R_v \rbrace_{v \in \s}$ is defined as above, and gives rise to a non-negative, adapted process $\mathcal{R}=\lbrace R_t, \mathcal{F}_t; 0 \leq t \leq T \rbrace$.  However, equation \eqref{s1} cannot be regarded as the process $\mathcal{R}$ evaluated at the stopping time $v$. The standard approach involves an important step known as the \textit{aggregation} step, that is, finding a RCLL modification $\mathcal{R}^0=\lbrace R^0_t, \mathcal{F}_t; 0 \leq t \leq T \rbrace$ of the process $\mathcal{R}$, such that for each $v\in \s$,  $R_v(\omega)=R^0_{v(\omega)}(\omega)$ for a.e. $\omega \in \Omega$. This process $\mathcal{R}^0$ is the generalized Snell envelope process of the payoff $Y$. It should be noted that this step is not trivial and relies on firm and sophisticated results of the general theory of stochastic processes (see e.g. \cite{Zam,Karat,aguilar}). The framework of our paper overcomes this complexity. One advantage of the set-up of families of random variables is that it allows to avoid the aggregation step. Another benefit is the weaker integrability assumption required for the existence of an optimal stopping time. Specifically, we only assume that $R<\infty$, which is a weaker condition compared to the classical assumption made in the literature, namely, that $E^\p[\sup\limits_{\tau \in \s} Y(\tau)] < \infty$, $\forall \p \in \pp$ (see e.g. \cite{Zam}). Moreover, optimal stopping times are characterized differently from the framework of processes. Given further assumptions on the reward family, we express the optimal stopping time as the essential infimum of a set of stopping times, instead of as the hitting time of processes, namely, $\tau^*= \essinf \lbrace \tau \in\s, R(\tau)=Y(\tau) \,\, \text{a.s.} \rbrace$.   

In order to prove the existence of an optimal model, we shall establish a “universal” Doob-Meyer-Mertens's decomposition for our Snell envelope family $\mathcal{R}=(R(v), v\in\s)$ in the sense that it holds simultaneously for all $\p \in \pp$. Precisely, under suitable conditions on the family $\mathcal{P}$, we decompose $\mathcal{R}$ as the difference between a $\pp$-martingale with RCLL paths, and an optional RCLL increasing process. There are two results which combined prove our claim. In the first, Theorem \ref{universal decompo}, we prove a universal optional decomposition theorem for $\pp$-supermartingale families. To the best of our knowledge, such a
decomposition was not obtained before. In Proposition \ref{decomp of R}, under the condition $\sup\limits_{\p \in \pp} E^\p[(Y^*)^2]<\infty$ on the random variable $Y^*:=\esssup_{\tau \in \s} Y(\tau)$, we show
that $\mathcal{R}$ satisfies the integrability condition of Theorem \ref{universal decompo}; thus, it admits the “universal” Doob-Meyer-Mertens's decomposition. All this leads to Theorem \ref{existence of optimal model}, the main result of Section \ref{optimmod}: there exists an optimal model $\p^*\in \pp$ such that \eqref{construct} holds.

Our framework, in particular, sheds new light on American knock-in barrier options valuation. Barrier options of knock-in type are financial derivatives that only become valuable if the underlying asset price reaches a predetermined barrier $H$, known as the “knocked-in” event. Once triggered, the option holder retains a standard American option with strike $K$, say, a put. For instance, an up-and-in barrier option requires the asset price to first rise and hit the upper barrier, triggering the knocked-in event. After this, the option holder would benefit from falling asset prices, resulting in a higher payoff for the put option.
Consider an underlying asset with the price process $S$ in a market environment where the riskless rate of return is $r$. Let $T$ be the expiration date for any option on the asset. Then, $(K- S)^+$ is the payoff of the put option when exercised at price $S$. In such a context, we are interested in the valuation problem for the American put-option of the “up-and-in” barrier type, with the payoff 
\begin{equation} \label{knockinpayoff}
    Y(t)= e^{-rt } (K- S(t))^+ \mathbf{1}_{\lbrace t \geq \tau_H \rbrace}, \quad 0 \leq t \leq T,
\end{equation}
where $\tau_H \triangleq \lbrace t \leq T : S(t) \leq H  \rbrace$ is the time where the option becomes “knocked-in” (see e.g. \cite{knockin}). The payoff process is not-right-continuous, our framework thus is particularly well-suited for analyzing this type of option. Now, assume that there is model uncertainty, and the decision maker has to choose a stopping time to maximize her expected reward under any of the models in a given probability class $\pp$. For instance, $\pp$ generated by an incomplete financial market. Therefore, the hedging price $H(S)$ of the American contingent claim \eqref{knockinpayoff} can be computed as the optimal expected reward in the following optimal stopping problem under uncertainty:
\begin{equation}
    H(S) \triangleq \sup\limits_{\p \in \pp} \sup\limits_{\tau \in \s} E^\p[e^{-r\tau } (K- S(\tau))^+ \mathbf{1}_{\lbrace \tau \geq \tau_H \rbrace}],
\end{equation}
$S=S_0$ the initial value. The interest of the setup of families of random variables indexed by stopping time has also been stressed by Kobylanski \textit{et al.} \cite{kobymulti}, in the framework of optimal multiple stopping. Further applications can be imagined, in for example, indifference valuation (seller’s perspective; Carmona \cite{carmona}, Roger \textit{et al.} \cite{Roger}) of general optimally stopped reward processes under the multiple priors model. 

The rest of the paper is organized as follows: Section \ref{s2} introduces the general set-up, including the formulation of the problem under model ambiguity. In Section \ref{s3}, we study the properties of the generalized Snell envelope family $(R(v), v\in \s)$. We also give necessary and sufficient conditions for the existence of an optimal pair $(\p^*, \tau^*)\in \pp \times \s$. Section \ref{s4} is devoted to establish the existence of an optimal stopping time. In Section \ref{optimmod}, we find conditions on the family $\pp$, under which there exists an optimal model for our problem.

\section{Setting and problem description} \label{s2}

 Consider a decision maker (e.g., seller, buyer, firm..) who needs to choose the best time to exercise a certain action in order to maximize her expected revenue. The decision is required to be made before a fixed predetermined time $T > 0$. Formally, we fix a filtered measurable base with finite horizon 
$(\Omega, \f, \mathbb{F}= \lbrace \f_t \rbrace_{ 0 \leq t\leq T})$. We assume that the filtration $\mathbb{F}$ satisfies the usual conditions of right continuity and augmentation by the null sets of $\f= \f_T$. By $\s$ we denote the class
of $\mathbb{F}$-stopping times with values in $[0,T]$. For a stopping time $v\in \s$, we define $\s_v:= \lbrace \tau \in \s \,\,  \vert \,\,  \tau \geq v \,\, \text{a.s.} \rbrace$ and $\s_{v^+}:= \lbrace \tau \in \s \,\,  \vert \,\,  \tau > v \,\, \text{a.s. on $\lbrace v <T \rbrace$ and $\tau = v$ a.s. on $\lbrace v =T \rbrace$} \rbrace$.

 The decision maker chooses a stopping time $\tau$ in $\s$. When she decides to stop at $\tau$, we assume that she receives the quantity $Y(\tau)$, where $Y(\tau)$ is a non-negative $\f_\tau$-measurable random variable. The problem is thus expressed in terms of families of random variables indexed by stopping times. The family $Y=(Y(\tau), \tau \in S)$ is called the reward (or payoff) family. Since the decision maker is uncertain about the probability with which the reward $Y$ is generated, she uses a class $\pp= \lbrace \p \,\vert \,\p \sim  \q \rbrace$ of probability models $\p$ on $(\Omega, \f)$ that share the same null sets with a base reference model $\q \in\pp$. Henceforth, the decision maker has to choose a random stopping time that maximizes the expected reward under any of the models. In our context, this yields to formulating the following optimal stopping problem (at time 0 and at time $v\in \s$):
 \begin{align*} \label{eq}
 &R \triangleq \sup_{ \p \in \mathcal{P}} \sup_{\tau \in \s} E^\p[Y(\tau)], \\
 &R(v) \triangleq \esssup_{ \p \in \pp} \esssup_{\tau \in \s_v} E^\p[Y(\tau) \vert \f_v]. \numberthis
 \end{align*} 
 Therefore, solving the optimal stopping time problem \eqref{eq} at time $v$, mainly consists to prove the existence of an optimal stopping time $\tau^*(v)$ and an optimal model $\p^*$, such that,
 \begin{equation}
 R(v) = E^{\p^*} [Y(\tau^*(v))\vert \f_v]\quad \text{a.s.}
\end{equation}  

We also use the following notations: 
\begin{enumerate}
\item[•] $L^0(\f, \rr)$ is the algebra of equivalence classes of $\rr$-valued random variables on $\Omega$.

\item[•] $L^0_+(\f,\rr):=\lbrace \xi \in L^0(\f, \rr) \vert  \xi \geq 0 \rbrace$.

\item[•] $L^1(\Omega, \f,\q)$ is the set of real-valued optional processes $\xi$ with $\sup\limits_{0\leq t\leq T} E^\q [\vert \xi_t\vert] < \infty$.

\item[•] For any sub-$\sigma$-field $\g$ of $\f$, for any probability measure $\p$ on $(\Omega, \f)$, we let $L^2(\g, \p)$ be the collection of real-valued $\g$-measurable random variables with absolute value admitting a $2$-moment under $\p$. This space is endowed with its usual norm.

\item[•] For a ladlag process $\phi$, we denote by $\phi_{t+}$ and $\phi_{t-}$ the right-hand and left-hand limit of $\phi$ at time $t$. We denote by $\Delta \phi_t := \phi_t - \phi_{t-}$ the size of left jump of $\phi$ at time $t$ (with the convention $\phi_{0^-}=\phi_0$), and by $\Delta_+ \phi_t := \phi_{t+} - \phi_t$ the size of right jump of $\phi$ at time $t$. 
\item[•] For real valued random variables $X$ and $X_n$, $n\in \nn$, “$X_n \uparrow X$” stands for “the sequence $(X_n)$ is non-decreasing and converges to $X$ a.s.”
\end{enumerate}

The following Definition can be found in \cite{L0conv}.

\begin{definition}\label{convexdef}
A subset $U$ of $L^1(\Omega, \f,\q)$ is said to be $L^0$-convex if $x \xi + (1 - x)\zeta \in U$ for all $\xi,\zeta \in U$ and $x \in L^0_+(\f,\rr)$ such that $0 \leq x \leq 1$.
\end{definition}

\section{First properties and necessary and sufficient conditions for optimality}\label{s3}

 In this subsection, we present some preliminary results on the value families $R$ and $R^+$ when the reward is given by an admissible family of random variables indexed by stopping times. 

\begin{definition} \label{admissible}
We say that a family $Y = (Y (\tau), \tau \in \mathcal{S})$ is admissible if it satisfies the
following conditions
\begin{enumerate}
\item For all $\tau \in \mathcal{S}$, $Y(\tau)$ is a $\mathcal{F}_\tau$-measurable non-negative random variable.
\item For all $\tau, \tau' \in \mathcal{S}$, $Y(\tau)= Y(\tau')$ a.s. on $\lbrace \tau = \tau'\rbrace$.
\end{enumerate}
If moreover for all $\tau \in \mathcal{S}$, $Y(\tau)$ is square-integrable, we say that the admissible family $Y$ is square integrable. 
\end{definition} 

\begin{remark}
 It is always possible to define an admissible family associated with a given process. More precisely, let $(Y_t)_{t\in [0,T]}$ be a non-negative progressive process. Define $Y(\tau) := Y_\tau$, for
each $\tau \in \s$. The family $Y = (Y_\tau, \tau \in \s)$ is clearly admissible.
\end{remark} 

Let now $(Y (\tau), \tau \in \mathcal{S})$ be an admissible reward family. For $v\in \mathcal{S}$, the value function at time $v$ is defined by 
\begin{equation}
 R(v):= \esssup_{ \p \in \pp} \esssup_{\tau \in \s_v} E^\p[Y(\tau) \vert \f_v], \label{eq1*}
 \end{equation} 
 the strict value function at time $v$ is defined by
 \begin{equation}
 R^+(v):= \esssup_{ \p \in \pp} \esssup_{\tau \in \s_v{^+}} E^\p[Y(\tau) \vert \f_v].
 \end{equation} 
 Due to our assumptions we can define the density process $Z^{\p}_t \triangleq \frac{d\p}{d\q}\vert_{\mathcal{F}_t}$, for any $t\in [0,T]$ and any $\p \in \mathcal{P}$. One can easily see that, $Z^\p$ is a $\q$-martingale, $Z^\p_0=1$ and $Z^\p_t>0$ a.s., for all $t\in[0,T]$.
  Hence, we define the set of $\q$-martingales:
  \begin{equation} \label{Z}
      \mathcal{Z}= \Big\lbrace  Z_t^\p=\frac{d\p}{d\q}\Big\vert_{\mathcal{F}_t}, \quad 0 \leq t \leq T \quad \text{for} \quad \p \in \mathcal{P} \Big\rbrace.
  \end{equation}
  
 For technical reasons, which we shall unveil in the following, we shall assume that the set $\mathcal{Z}$ is $L^0$-convex. From the Bayes' rule, we can rewrite 
 \begin{equation}
   E^\p[Y(\tau) \vert \mathcal{F}_v]= E^\q\Big[ \frac{Z^{\p}_\tau}{Z^{\p}_v} Y(\tau) \Big\vert \mathcal{F}_v \Big]= \frac{1}{Z^{\p}_v} E^\q[Z^{\p}_\tau Y(\tau) \vert \mathcal{F}_v], \quad \tau \in \mathcal{S}_v.
   \end{equation}
Therefore, $R(v)$ and $R^+(v)$ becomes 
\begin{align}
&R(v)= \esssup_{ Z \in \mathcal{Z}} \esssup_{\tau \in \mathcal{S}_v} \Gamma(v\vert \tau, Z), \label{R}\\
&R^+(v)=\esssup_{ Z \in \mathcal{Z}} \esssup_{\tau \in \mathcal{S}_{v^+}} \Gamma(v\vert \tau, Z), \label{R^+}
\end{align}
where $ \Gamma(v\vert \tau, Z):=  E^\p[Y(\tau) \vert \mathcal{F}_v] \triangleq \frac{1}{Z_v} E^\q[Z_\tau Y(\tau) \vert \mathcal{F}_v]$. Since this random variable depends only on the restriction of the process $Z$ to the stochastic interval $\textbf{[} v, \tau\textbf{]}$, we consider $\mathcal{Z}_{v,\tau}$ to be the restriction of $\mathcal{Z}$ to this interval.

\begin{proposition} (Admissibility of $v$ and $v^+$) \label{r r+ admiss}
The families $R= (R(v), v\in \mathcal{S})$ and $R^+= (R^+(v), v\in \mathcal{S})$ defined by \eqref{R} and \eqref{R^+} are admissible. 
\end{proposition}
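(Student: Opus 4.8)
The plan is to verify the two defining conditions of Definition~\ref{admissible} for each family, handling $R$ and $R^+$ in parallel since only the index set ($\s_v$ versus $\s_{v^+}$) changes. Condition~(1) is immediate: for fixed $\p\in\pp$ and $\tau$ the random variable $E^\p[Y(\tau)\vert\f_v]=\Gamma(v\vert\tau,Z)$ is $\f_v$-measurable, and it is non-negative because $Y(\tau)\ge 0$. Since an essential supremum of a family of $\f_v$-measurable random variables is again $\f_v$-measurable, and the index sets are non-empty ($v\in\s_v$ and $T\in\s_{v^+}$), both $R(v)$ and $R^+(v)$ are $\f_v$-measurable and non-negative.

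The substance is condition~(2), the consistency relation $R(v)=R(v')$ a.s.\ on $A:=\{v=v'\}$, which I would prove by a splicing argument. First recall the two standard facts I will rely on: $A\in\f_v\cap\f_{v'}=\f_{v\wedge v'}$, and the conditional expectations $E^\p[\cdot\vert\f_v]$ and $E^\p[\cdot\vert\f_{v'}]$ coincide a.s.\ on $A$ (because $\f_v$ and $\f_{v'}$ induce the same trace on $A$). Now fix $\p\in\pp$ and $\tau\in\s_v$ and define the spliced time $\tau':=\tau\,\mathbf{1}_A+T\,\mathbf{1}_{A^c}$. I would verify that $\tau'$ is a stopping time and that $\tau'\in\s_{v'}$: on $A$ one has $\tau'=\tau\ge v=v'$, while on $A^c$ one has $\tau'=T\ge v'$. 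For $R^+$ the same definition gives $\tau'\in\s_{{v'}^+}$, the strict inequality on $\{v'<T\}$ being inherited from $\tau\in\s_{v^+}$ on $A$ and from $\tau'=T$ on $A^c$.

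By admissibility of the reward family $Y$ and $A\subseteq\{\tau=\tau'\}$, we have $Y(\tau)=Y(\tau')$ a.s.\ on $A$. Using $A\in\f_v\cap\f_{v'}$ together with the coincidence of the two conditional expectations on $A$, I would then obtain
\[
E^\p[Y(\tau)\vert\f_v]\,\mathbf{1}_A=E^\p[Y(\tau)\mathbf{1}_A\vert\f_v]=E^\p[Y(\tau')\mathbf{1}_A\vert\f_{v'}]=E^\p[Y(\tau')\vert\f_{v'}]\,\mathbf{1}_A\le R(v')\,\mathbf{1}_A.
\]
Because this holds for every $\p\in\pp$ and every $\tau\in\s_v$, and because an essential supremum is attained along a countable subfamily, passing to the $\esssup$ over $\p\in\pp$ and $\tau\in\s_v$ yields $R(v)\mathbf{1}_A\le R(v')\mathbf{1}_A$. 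Exchanging the roles of $v$ and $v'$ gives the reverse inequality, hence $R(v)=R(v')$ a.s.\ on $\{v=v'\}$; the identical chain with $\s_{v^+},\s_{{v'}^+}$ settles $R^+$.

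The main obstacle is the bookkeeping in the splicing step: one must check that $\tau'$ genuinely belongs to $\s_{v'}$ (resp.\ $\s_{{v'}^+}$), which rests on the measurability $A\in\f_{v\wedge v'}$, and one must justify moving the indicator $\mathbf{1}_A$ through both the conditional expectation (legitimate since $A$ lies in the relevant $\sigma$-field) and the essential supremum (legitimate via the countable-supremum representation). These points are standard but are exactly where the argument must be carried out with care.
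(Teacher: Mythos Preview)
Your proof is correct and follows essentially the same route as the paper: both arguments establish measurability via the essential supremum and prove the consistency condition by splicing $\tau':=\tau\,\mathbf{1}_A+T\,\mathbf{1}_{A^c}$, bounding $E^\p[Y(\tau)\vert\f_v]$ on $A$ by $R(v')$ (resp.\ $R^+(v')$), and then invoking symmetry. The only cosmetic difference is that the paper phrases the comparison through the density-process notation $\Gamma(v\vert\tau,Z)$ rather than $E^\p[\,\cdot\,\vert\f_v]$, but the underlying computation is identical.
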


\begin{proof}
Let us prove the property for $R^+= (R^+(v), v\in \mathcal{S})$. Thanks to the definition of the essential supremum (see Neveu \cite{Neveu}), one can see that for each $v \in \mathcal{S}$, $R^+(v)$ is an $\mathcal{F}_v$-measurable random variable. Let now $v,v' \in \mathcal{S}$ and set $A:= \lbrace v = v' \rbrace$. For every $\tau \in \mathcal{S}_{v^+}$, put $\tau_A= \tau \mathbf{1}_{A} + T \mathbf{1}_{A^c}$. We see at once that $\tau_A \in \mathcal{S}_{v^+}$. Since $A \in \mathcal{F}_v \cap \mathcal{F}_{v'}$, we get a.s. on $A$, 
 \begin{align*}
 \Gamma(v\vert \tau, Z)= \frac{1}{Z_v} E^\q[Z_\tau Y(\tau) \vert \mathcal{F}_v]&= \frac{1}{Z_v} E^\q[Z_{\tau_A} Y(\tau_A) \vert \mathcal{F}_{v'}]\\
 &= \Gamma(v'\vert \tau_A, Z) \\
 &\leq  \esssup_{ Z \in \mathcal{Z}} \esssup_{\tau \in \mathcal{S}_{v'^+}} \Gamma(v'\vert \tau, Z)= R^+(v').
\end{align*}   
Then, taking the essential supremum over $Z \in \mathcal{Z}$ and $\tau \in \mathcal{S}_{v^+}$, we obtain $R^+(v)\leq R^+(v')$ a.s. By symmetry of $v$ and $v'$, we obtain the converse inequality and the proof is complete. Similar arguments show that $R$ is admissible.
\end{proof}

\begin{proposition}(Optimizing sequence for $R$ and $R^+$)
For any $v \in \mathcal{S}$, the family of random variables $\lbrace \Gamma(v\vert \tau,Z) / \tau \in \mathcal{S}_v, Z\in \mathcal{Z}_{v,\tau} \rbrace$ (resp. $\lbrace \Gamma(v\vert \tau,Z) / \tau \in \mathcal{S}_{v^+}, Z\in \mathcal{Z}_{v,\tau} \rbrace$) is closed under pairwise maximization. That is, there exists a sequence $\lbrace (\tau_n, Z^n)\rbrace_{n \in \mathbb{N}}$ with $\tau_n$ in $\mathcal{S}_v$ (resp. $\mathcal{S}_{v^+}$) and $Z^n \in \mathcal{Z}_{v,\tau_n}$ such that the sequence $\lbrace \Gamma(v \vert \tau_n, Z^n) \rbrace_{n\in \mathbb{N}}$ is increasing and such that 
\begin{align*}
R(v) \quad \text{(resp. $R^+(v)$)}\quad = \lim\limits_{n \rightarrow \infty} \Gamma(v\vert \tau_n, Z^n) \quad \text{a.s.}
\end{align*}
\end{proposition}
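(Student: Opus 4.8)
The plan is to reduce the entire statement to a single \emph{directedness} property and then invoke the classical characterisation of the essential supremum. Recall (Neveu \cite{Neveu}) that if a family $\Lambda$ of random variables is closed under pairwise maximisation, then there exists an increasing sequence drawn from $\Lambda$ converging a.s. to $\esssup \Lambda$. Applied to $\Lambda = \lbrace \Gamma(v\vert\tau,Z) : \tau\in\mathcal{S}_v,\ Z\in\mathcal{Z}_{v,\tau} \rbrace$ (resp. with $\mathcal{S}_{v^+}$), whose essential supremum is exactly $R(v)$ (resp. $R^+(v)$) by \eqref{R}--\eqref{R^+}, this yields the announced optimising sequence $\lbrace(\tau_n,Z^n)\rbrace$. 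Hence the only thing to prove is that $\Lambda$ is directed upward.

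To that end I would fix two pairs $(\tau_1,Z^1)$ and $(\tau_2,Z^2)$ in the index set and \emph{paste} them along the $\mathcal{F}_v$-measurable event on which the first beats the second. Since each $\Gamma(v\vert\tau_i,Z^i)$ is $\mathcal{F}_v$-measurable, the set $A:=\lbrace\Gamma(v\vert\tau_1,Z^1)\geq\Gamma(v\vert\tau_2,Z^2)\rbrace$ belongs to $\mathcal{F}_v$. I then define $\tau_3:=\tau_1\mathbf{1}_A+\tau_2\mathbf{1}_{A^c}$ and $Z^3:=\mathbf{1}_AZ^1+\mathbf{1}_{A^c}Z^2$. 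Because $A\in\mathcal{F}_v\subseteq\f$ and $\tau_1,\tau_2\in\mathcal{S}_v$, the random time $\tau_3$ is again a stopping time in $\mathcal{S}_v$ (and in $\mathcal{S}_{v^+}$ when $\tau_1,\tau_2\in\mathcal{S}_{v^+}$, since then $\tau_3>v$ on $\lbrace v<T\rbrace$ and $\tau_3=v$ on $\lbrace v=T\rbrace$).

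The decisive point, and the one where the standing hypothesis is genuinely used, is that $Z^3$ is an admissible density, i.e. $Z^3\in\mathcal{Z}_{v,\tau_3}$. Indeed $Z^3$ is precisely an $L^0$-convex combination of $Z^1$ and $Z^2$ with weight $x=\mathbf{1}_A\in L^0_+(\f,\rr)$, $0\leq x\leq1$; since $\mathcal{Z}\subseteq L^1(\Omega,\f,\q)$ (each density martingale satisfies $\sup_t E^\q[\vert Z_t\vert]=1$) and $\mathcal{Z}$ was assumed $L^0$-convex, Definition \ref{convexdef} gives $Z^3\in\mathcal{Z}$, whence its restriction to $\textbf{[}v,\tau_3\textbf{]}$ lies in $\mathcal{Z}_{v,\tau_3}$. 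Here I would flag the subtlety that the pasting preserves the $\q$-martingale structure on the relevant stochastic interval only because $A$ is $\mathcal{F}_v$-measurable; this is exactly what makes the construction legitimate for the value evaluated at time $v$.

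It then remains a direct computation. Using that $\mathbf{1}_A$, $\mathbf{1}_{A^c}$ and $1/Z^i_v$ are $\mathcal{F}_v$-measurable, that $Z^3_{\tau_3}/Z^3_v=Z^1_{\tau_1}/Z^1_v$ on $A$ and $=Z^2_{\tau_2}/Z^2_v$ on $A^c$, and that $Y(\tau_3)=Y(\tau_1)$ on $A$ while $Y(\tau_3)=Y(\tau_2)$ on $A^c$ by admissibility of $Y$, one obtains
\begin{align*}
\Gamma(v\vert\tau_3,Z^3)&=\mathbf{1}_A\,\Gamma(v\vert\tau_1,Z^1)+\mathbf{1}_{A^c}\,\Gamma(v\vert\tau_2,Z^2)\\
&=\max\big(\Gamma(v\vert\tau_1,Z^1),\,\Gamma(v\vert\tau_2,Z^2)\big)\quad\text{a.s.},
\end{align*}
so $(\tau_3,Z^3)$ dominates both pairs and $\Lambda$ is directed upward. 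I expect the main obstacle to be not this computation but the verification that the pasted object $Z^3$ truly belongs to $\mathcal{Z}_{v,\tau_3}$: this is the sole reason the $L^0$-convexity assumption on $\mathcal{Z}$ is imposed, and one must check carefully that the $\mathcal{F}_v$-measurability of $A$ is precisely what keeps $Z^3$ inside the class.
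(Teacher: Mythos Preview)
Your proof is correct and follows essentially the same route as the paper: paste the two stopping times and densities along the $\mathcal{F}_v$-event $A$, verify that the resulting pair realises the pairwise maximum of the $\Gamma$-values, and then invoke Neveu's optimising-sequence lemma for upward-directed families. The only cosmetic difference is that the paper writes the pasted density as $Z_t = Z^1_t\,\q(A^c\mid\mathcal{F}_t) + Z^2_t\,\q(A\mid\mathcal{F}_t)$ rather than your $Z^3 = \mathbf{1}_A Z^1 + \mathbf{1}_{A^c} Z^2$; since $A\in\mathcal{F}_v$ these agree on $\textbf{[}v,\tau_3\textbf{]}$ and produce the same $\Gamma(v\vert\cdot,\cdot)$, while your version is actually the one that fits Definition~\ref{convexdef} literally.
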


\begin{proof}
We prove the property for $R$, the arguments are the same for $R$. Let $\tau_1, \tau_2 \in \mathcal{S}_{v}$ and $Z^1$, $Z^2 \in \mathcal{Z}$, and consider $A= \lbrace \Gamma(v\vert \tau_2, Z^2) \geq \Gamma(v \vert \tau_1, Z^1) \rbrace \in \mathcal{F}_v$. We put also
\begin{align*}
    \tau&= \tau_1 \mathbf{1}_{A^c} + \tau_2 \mathbf{1}_A,\\
    Z_t&= Z^1_t \q(A^c \vert \mathcal{F}_t) + Z^2_t \q(A \vert \mathcal{F}_t), \quad 0 \leq t \leq T.
\end{align*}
Then $\tau$ is a stopping in $\mathcal{S}_v$, and since $\mathcal{Z}$ is $L^0$-convex, we have $Z \in \mathcal{Z}$. Therefore, \begin{align*}
 \Gamma(v \vert \tau,Z)&= E^\q \Big [\frac{Z_\tau}{Z_v} Y(\tau) \Big\vert \mathcal{F}_v\Big]\\
 &= E^\q\Big[ \frac{Z^1_{\tau_1}}{Z^1_v} Y(\tau_1) \Big\vert \mathcal{F}_v \Big] \mathbf{1}_{A^c} +  E^\q\Big[ \frac{Z^2_{\tau_2}}{Z^2_v} Y(\tau_2) \Big\vert \mathcal{F}_v \Big] \mathbf{1}_{A}\\
 &= \Gamma(v \vert \tau_1,Z^1) \mathbf{1}_{A^c} + \Gamma(v \vert \tau_2,Z^2) \mathbf{1}_{A} \\
 &= \Gamma(v \vert \tau_1,Z^1) \vee \Gamma(v \vert \tau_2,Z^2).
 \end{align*}
 Hence, the set $\lbrace \Gamma(v\vert \tau,Z) / \tau \in \mathcal{S}_v, Z\in \mathcal{Z}_{v,\tau} \rbrace$ is closed under pairwise maximization. The existence of an optimizing sequence follows from a classical result on essential supremum (Neveu (1975) ).
\end{proof}

\begin{definition}
 An admissible family $Y=(Y(\tau), \tau\in \mathcal{S})$ is said to be a $\pp$-supermartingale family (resp. a $\mathcal{P}$-martingale family) if for all $\tau, \tau' \in \mathcal{S}$, such that $\tau\geq \tau'$ a.s.,
 \begin{align*}
 &E^\p[Y(\tau) \vert \mathcal{F}_{\tau'}] \leq Y(\tau') \quad \text{a.s. for each measure} \quad \p \in \mathcal{P}, \\
 &\text{(resp.} \quad E^\p[Y(\tau) \vert \mathcal{F}_{\tau'}] = Y(\tau') \quad \text{a.s. for all measure} \quad \p \in \mathcal{P}). 
 \end{align*}
 \end{definition}
 
 The following proposition states that the value function $R$ and the strict value function $R^+$ are both supermartingale families.
 
 \begin{proposition}
 The admissible families $R =(R(v), v \in \mathcal{S})$ and $R^+ = (R^+(v), v \in \mathcal{S})$ are $\mathcal{P}$-supermartingale families in the sense of the above Definition. Moreover, the value family $R =(R(v), v \in \mathcal{S})$ is characterized as the Snell envelope family associated with $(Y(v), v \in \mathcal{S})$, that is the smallest $\mathcal{P}$-supermartingale family which is greater a.s. than $(Y(v), v \in \mathcal{S})$.
 \end{proposition}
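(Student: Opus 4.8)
The plan is to verify the $\mathcal{P}$-supermartingale property first and then the minimality, carrying out the argument for $R$ in detail (the family $R^+$ being handled identically, with $\mathcal{S}_{v^+}$ replacing $\mathcal{S}_v$). Admissibility is already granted by Proposition \ref{r r+ admiss}, so only the ordering relations between conditional expectations taken at different stopping times remain to be established.

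For the supermartingale property I fix a measure $\p \in \pp$ with density process $Z = Z^{\p}$ and two stopping times $\tau' \leq \tau$ a.s., and I aim to show $E^{\p}[R(\tau) \mid \f_{\tau'}] \leq R(\tau')$. By the preceding proposition there is a non-decreasing optimizing sequence $\Gamma(\tau \mid \sigma_n, Z^n) \uparrow R(\tau)$ with $\sigma_n \in \mathcal{S}_\tau$ and $Z^n \in \mathcal{Z}$. Since all quantities are non-negative, the conditional monotone convergence theorem gives $E^{\p}[R(\tau)\mid\f_{\tau'}] = \lim_n E^{\p}[\Gamma(\tau\mid\sigma_n,Z^n)\mid\f_{\tau'}]$ a.s. The crux is then the identity $E^{\p}[\Gamma(\tau\mid\sigma_n,Z^n)\mid\f_{\tau'}] = \Gamma(\tau'\mid\sigma_n,\bar Z^n)$, where $\bar Z^n$ is the density obtained by concatenating $Z$ on $\textbf{[}0,\tau\textbf{]}$ with $Z^n$ on $\textbf{[}\tau,T\textbf{]}$, renormalised so that the two pieces agree at $\tau$ (that is, $\bar Z^n_t = Z_t$ for $t\le\tau$ and $\bar Z^n_t = (Z_\tau/Z^n_\tau)\,Z^n_t$ for $t\ge\tau$). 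This identity is a direct computation via the Bayes rule, the $\f_\tau$-measurability of $Z_\tau/Z^n_\tau$, and the tower property. Provided $\bar Z^n \in \mathcal{Z}$, each term equals $\Gamma(\tau'\mid\sigma_n,\bar Z^n) \leq R(\tau')$, since $\sigma_n \in \mathcal{S}_\tau \subseteq \mathcal{S}_{\tau'}$; passing to the limit then yields the claim.

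The membership $\bar Z^n \in \mathcal{Z}$ is the main obstacle, and it is exactly here that the structural hypotheses on $\mathcal{Z}$ enter: one needs $\mathcal{Z}$ to be stable under pasting at the stopping time $\tau$, a property going beyond the bare $L^0$-convexity of Definition \ref{convexdef} and implicit in the restriction structure $\mathcal{Z}_{v,\tau}$ used in \eqref{R}. I would secure it by first approximating $\tau$ from above by stopping times $\tau_m$ taking finitely many values $t_1<\dots<t_{k_m}$, writing the corresponding concatenation on each atom $\{\tau_m = t_j\}\in\f_{\tau_m}$ as a gluing governed by the $\f_t$-conditional probabilities of these atoms, exactly as in the pairwise-maximisation construction, so that it lies in $\mathcal{Z}$; passing to the limit $m\to\infty$ recovers $\bar Z^n$. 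Once this stability is in hand the supermartingale inequality for $R$, and \emph{mutatis mutandis} for $R^+$ over $\mathcal{S}_{v^+}$, follows.

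Finally, for the Snell-envelope characterisation I would argue directly and without pasting. That $R$ dominates $Y$ is immediate, since choosing $\sigma = v$ and any $\p$ gives $R(v) \geq E^{\p}[Y(v)\mid\f_v] = Y(v)$ a.s. Conversely, let $\mathcal{R}' = (R'(v), v\in\mathcal{S})$ be any $\pp$-supermartingale family with $R'(v) \geq Y(v)$ a.s. for all $v$. For every $\p\in\pp$ (with density $Z$) and every $\sigma \in \mathcal{S}_v$, the supermartingale property of $\mathcal{R}'$ and the domination $R'(\sigma)\ge Y(\sigma)$ give $R'(v) \geq E^{\p}[R'(\sigma)\mid\f_v] \geq E^{\p}[Y(\sigma)\mid\f_v] = \Gamma(v\mid\sigma,Z)$. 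Taking the essential supremum over $\sigma\in\mathcal{S}_v$ and $Z\in\mathcal{Z}$ yields $R'(v)\geq R(v)$ a.s. Hence $R$ is the smallest $\pp$-supermartingale family dominating $Y$, i.e. its Snell envelope family.
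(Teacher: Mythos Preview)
Your argument is correct and follows essentially the same route as the paper: an optimizing sequence plus conditional monotone convergence for the $\pp$-supermartingale inequality, and the direct domination argument for the Snell-envelope characterisation (the latter is verbatim the paper's). The only difference is emphasis on the pasting step: the paper simply asserts $E^\p[\Gamma(v\vert \tau_n, Z^n) \mid \mathcal{F}_{v'}] \leq R(v')$ from $\tau_n \in \mathcal{S}_{v'}$ and $Z^n \in \mathcal{Z}_{v,\tau_n}$, treating the concatenation you isolate as implicit in the restriction notation $\mathcal{Z}_{v,\tau}$ (and made explicit only later, e.g.\ in the proof of Proposition~\ref{expectation of R}, where the authors take $Z^n_u = Z_u$ on $[v,\tau]$ ``without loss of generality''), whereas you flag it as the crux and sketch a discretisation justification. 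Your extra care is well placed, but no alternative strategy is involved.
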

 
 \begin{proof}
 Let us prove the first property for $R$. Let $v,v' \in \mathcal{S}$ with $v \geq v'$ a.s. By Proposition 3, there exists an optimizing sequence $\lbrace (\tau_n, Z^n)\rbrace_{n \in \mathbb{N}}$ with $\tau_n$ in $\mathcal{S}_v$ and $Z^n \in \mathcal{Z}_{v,\tau_n}$.  Let now $\p\in \mathcal{P}$, by the monotone convergence theorem, we get
 \begin{align*}
 E^\p[R(v) \vert \mathcal{F}_{v'}]=   \lim\limits_{n \rightarrow \infty} E^\p[\Gamma(v\vert \tau_n, Z^n)  \vert \mathcal{F}_{v'}] \quad \text{a.s.}
  \end{align*}
  Since for each $n$, $\tau_n \in \mathcal{S}_{v'}$ and $Z^n  \in \mathcal{Z}_{v,\tau_n}$, we have 
  $$E^\p[\Gamma(v\vert \tau_n, Z^n)  \vert \mathcal{F}_{v'}] \leq R(v') \quad \text{a.s.}$$ Hence 
  $$ E^\p[R(v) \vert \mathcal{F}_v']  \leq R(v') \quad \text{a.s.,}$$
  which gives the $\mathcal{P}$-supermartingale property of $R$.
  What is left is to show the second statement. We see at once that $(R(v), v \in \mathcal{S})$ is a $\mathcal{P}$-supermartingale family and for each $v \in \mathcal{S}$, $R(v) \geq Y(v)$ a.s.  Let $(\bar{R}(v), v \in \mathcal{S})$ be another $\mathcal{P}$-supermartingale family such that for each $v \in \mathcal{S}$, $\bar{R}(v) \geq Y(v)$ a.s. Fix $v \in \mathcal{S}$. By the $\mathcal{P}$-supermartingale property of $\bar{R}$, we have for every stopping time $\tau \in\mathcal{S}_v$, and every measure $\p \in \mathcal{P}$
\begin{equation*}
\bar{R}(v) \geq E^\p[\bar{R}(\tau) \vert \mathcal{F}_v] \geq E^\p[ Y(\tau) \vert \mathcal{F}_v] \quad \text{a.s.}
\end{equation*}
Taking the supremum over $\tau \in \mathcal{S}_v$ and $\p\in \mathcal{P}$, we obtain $\bar{R}(v) \geq R(v)$ a.s., and the proposition follows.
 \end{proof}
 
  \begin{proposition} \label{R=YvR+}
 For every $v \in \mathcal{S}$,  $R(v) = Y(v) \vee R^+(v)$ a.s.
 \end{proposition}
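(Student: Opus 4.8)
The plan is to establish the two inequalities $R(v)\ge Y(v)\vee R^+(v)$ and $R(v)\le Y(v)\vee R^+(v)$ separately. The first is immediate: taking $\tau=v\in\mathcal{S}_v$ and any fixed $\p\in\pp$, the $\f_v$-measurability of $Y(v)$ gives $E^\p[Y(v)\vert\f_v]=Y(v)$, so $R(v)\ge Y(v)$ a.s.; and since $\mathcal{S}_{v^+}\subseteq\mathcal{S}_v$, every pair admissible in the definition of $R^+(v)$ is also admissible for $R(v)$, whence $R(v)\ge R^+(v)$ a.s. Combining the two yields $R(v)\ge Y(v)\vee R^+(v)$.

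For the reverse inequality, the idea is to bound each individual competitor by $Y(v)\vee R^+(v)$. Precisely, I would show that for every fixed $\tau\in\mathcal{S}_v$ and every $\p\in\pp$ one has $E^\p[Y(\tau)\vert\f_v]\le Y(v)\vee R^+(v)$ a.s.; since the right-hand side is independent of $\tau$ and $\p$, taking the essential supremum over both indices then gives $R(v)\le Y(v)\vee R^+(v)$. Fix such a pair and set $B:=\{\tau=v\}$. Because $\tau\ge v$ one has $B=\{\tau\le v\}\in\f_v$, and since $\tau$ takes values in $[0,T]$ the inclusion $\{v=T\}\subseteq B$ holds. Then I would split $E^\p[Y(\tau)\vert\f_v]=E^\p[Y(\tau)\mathbf{1}_B\vert\f_v]+E^\p[Y(\tau)\mathbf{1}_{B^c}\vert\f_v]$; on $B$ the admissibility of $Y$ forces $Y(\tau)=Y(v)$, and as $B\in\f_v$ the first term is exactly $\mathbf{1}_B\,Y(v)$.

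The core of the argument is the treatment of the second term, for which I would introduce the spliced stopping time $\hat\tau:=\tau\mathbf{1}_{B^c}+T\mathbf{1}_B$. Since $B^c\in\f_v\subseteq\f_\tau$, this is the standard freezing of $\tau$ on an $\f_\tau$-measurable set and hence lies in $\mathcal{S}$. I would then verify $\hat\tau\in\mathcal{S}_{v^+}$: on $B^c\cap\{v<T\}$ we have $\hat\tau=\tau>v$, on $B\cap\{v<T\}$ we have $\hat\tau=T>v$, and on $\{v=T\}\subseteq B$ we have $\hat\tau=T=v$. Because $\hat\tau=\tau$ on $B^c$, admissibility gives $Y(\hat\tau)=Y(\tau)$ there, so using $B^c\in\f_v$ the second term equals $\mathbf{1}_{B^c}E^\p[Y(\hat\tau)\vert\f_v]\le\mathbf{1}_{B^c}R^+(v)$. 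Adding the two pieces gives $E^\p[Y(\tau)\vert\f_v]\le\mathbf{1}_B\,Y(v)+\mathbf{1}_{B^c}R^+(v)\le Y(v)\vee R^+(v)$, as required.

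The hard part, though technically light, is the bookkeeping around $\hat\tau$: one must confirm both the measurability $B\in\f_v$ and the inclusion $\{v=T\}\subseteq B$ before the splice can be certified as an element of $\mathcal{S}_{v^+}$ rather than merely of $\mathcal{S}$ (this is exactly where the constraint in the definition of $\mathcal{S}_{v^+}$ on $\{v=T\}$ is used). Once these are in place, the remaining steps are routine manipulations of conditional expectations together with clause (2) of the definition of admissibility, and no integrability hypothesis on $Y$ is needed.
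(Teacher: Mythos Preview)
Your proof is correct and follows essentially the same approach as the paper: the paper splits on the event $\{\tau>v\}$ versus $\{\tau=v\}$, introduces the spliced stopping time $\bar\tau=\tau\mathbf{1}_{\{\tau>v\}}+T\mathbf{1}_{\{\tau=v\}}$ (which is exactly your $\hat\tau$ with $B=\{\tau=v\}$), checks that $\bar\tau\in\mathcal{S}_{v^+}$, and bounds the two pieces by $Y(v)$ and $R^+(v)$ respectively. Your write-up is in fact more careful than the paper's about the measurability $B\in\f_v$ and the verification of the $\{v=T\}$ clause in the definition of $\mathcal{S}_{v^+}$, points which the paper dismisses with ``one can see that $\bar\tau$ belongs to $\mathcal{S}_{v^+}$.''
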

 
 \begin{proof}
 Let $v$ be a stopping time in  $\mathcal{S}$. Take $\tau\in \mathcal{S}_v$, we first show that for every $Z \in \mathcal{Z}$,
    \begin{equation}
    \Gamma(v \vert \tau,Z) \leq Y(v) \vee R^+(v) \quad \text{a.s.}
    \end{equation}
    We set $\bar{\tau}=\tau \mathbf{1}_{\lbrace v> \tau \rbrace} + T \mathbf{1}_{\lbrace \tau= v \rbrace}$. One can see that $\bar{\tau}$ belongs to $\mathcal{S}_{v^+}$. Hence,
    \begin{equation}
    \Gamma(v \vert \tau, Z)  \mathbf{1}_{\lbrace \tau>v \rbrace} = \Gamma(v \vert \bar{\tau}, Z) \mathbf{1}_{\lbrace \tau> v \rbrace}\leq R^+(v) \mathbf{1}_{\lbrace \tau > v \rbrace} \quad \text{a.s.}
    \end{equation}
   Therefore, 
   $$\Gamma(v \vert \tau, Z)= Y(v) \mathbf{1}_{\lbrace  \tau= v \rbrace} +  \Gamma(v \vert \tau, Z) \mathbf{1}_{\lbrace \tau> v \rbrace} \leq Y(v) \mathbf{1}_{\lbrace  \tau= v \rbrace} +  R^+(v) \mathbf{1}_{\lbrace \tau > v \rbrace} \quad \text{a.s.} $$
   By taking the essential supremum over $\tau \in \mathcal{S}_v$ and then the essential supremum over $Z \in \mathcal{Z}$, we get $R(v) \leq Y(v) \vee R^+(v)$ a.s.
   The other inequality follows immediately from the fact that $R(v) \geq R^+(v)$ a.s. and $R(v) \geq Y(v)$ a.s., which completes the proof. \\ 
 \end{proof}
   
   \begin{proposition} \label{expectation of R} For any $v \in \mathcal{S}$, $\tau \in \mathcal{S}_v$ and $\p\in \mathcal{P}$, we have
\begin{equation} \label{prop5}
 E^\p[R^+(\tau) \vert \mathcal{F}_v]= \esssup_{ \sigma \in \mathcal{S}_{\tau^+} }  E^\p[Y(\sigma) \vert \mathcal{F}_v] \quad \text{a.s.}
 \end{equation} 
 In particular, $E^\p[R^+(\tau)]= \sup\limits_{\sigma \in \mathcal{S}_{\tau^+}} E^\p[Y(\sigma)]$.
   \end{proposition}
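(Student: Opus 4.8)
The plan is to establish \eqref{prop5} as two opposite a.s.\ inequalities, following the Snell-envelope scheme of \cite{kobQuen}, and then to read off the unconditional statement as a corollary. The inequality ``$\geq$'' is the routine one: fix $\sigma \in \s_{\tau^+}$ and the measure $\p$. By the very definition of $R^+(\tau)$ as an essential supremum (over measures and over $\s_{\tau^+}$), one has $R^+(\tau) \geq E^\p[Y(\sigma)\mid \f_\tau]$ a.s. Since $v \leq \tau$ a.s., the tower property \emph{under the single measure} $\p$ gives $E^\p[Y(\sigma)\mid\f_v] = E^\p\big[E^\p[Y(\sigma)\mid\f_\tau]\mid\f_v\big] \leq E^\p[R^+(\tau)\mid\f_v]$ a.s.; taking the essential supremum over $\sigma\in\s_{\tau^+}$ then yields $\esssup_{\sigma\in\s_{\tau^+}} E^\p[Y(\sigma)\mid\f_v] \leq E^\p[R^+(\tau)\mid\f_v]$.

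For the reverse inequality I would invoke the optimizing-sequence property proved above, applied at the stopping time $\tau$: there is a sequence $(\sigma_n, Z^n)_{n}$ with $\sigma_n\in\s_{\tau^+}$ and $Z^n\in\z_{\tau,\sigma_n}$ such that $\Gamma(\tau\mid\sigma_n,Z^n)\uparrow R^+(\tau)$ a.s. Applying $E^\p[\cdot\mid\f_v]$ and the monotone convergence theorem for conditional expectations, $E^\p[R^+(\tau)\mid\f_v] = \lim_n E^\p[\Gamma(\tau\mid\sigma_n,Z^n)\mid\f_v]$ a.s., so it would suffice to dominate each term on the right by $\esssup_{\sigma\in\s_{\tau^+}} E^\p[Y(\sigma)\mid\f_v]$.

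I expect this last domination to be the main obstacle, and it is precisely the point where the structure of $\pp$ must enter. The difficulty is that $\Gamma(\tau\mid\sigma_n,Z^n)$ is computed under the densities $Z^n$, whereas the outer conditioning is taken under the fixed $\p$. Rewriting through the reference measure $\q$ by Bayes' rule, $E^\p[\Gamma(\tau\mid\sigma_n,Z^n)\mid\f_v]$ equals $\tfrac{1}{Z^\p_v}E^\q\big[\tfrac{Z^\p_\tau}{Z^n_\tau}Z^n_{\sigma_n}Y(\sigma_n)\mid\f_v\big]$, i.e.\ it is of the form $\Gamma(v\mid\sigma_n,\widehat Z^n)$ for the density $\widehat Z^n$ obtained by keeping $Z^\p$ up to time $\tau$ and switching to $Z^n$ between $\tau$ and $\sigma_n$. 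The crux is therefore to control this ``pasted'' density, and this is where the hypotheses on $\z$ (its $L^0$-convexity, together with a stability-under-pasting property) have to be brought to bear; verifying that the pasted object stays admissible and, above all, that the resulting quantity does not overshoot the single-measure right-hand side is the delicate part of the argument.

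Finally, once \eqref{prop5} is in hand, the unconditional identity $E^\p[R^+(\tau)] = \sup_{\sigma\in\s_{\tau^+}}E^\p[Y(\sigma)]$ follows by taking $E^\p$-expectations on both sides and using the same optimizing sequence together with monotone convergence to interchange $E^\p$ with the essential supremum over $\sigma$ (the family $\{E^\p[Y(\sigma)\mid\f_v]:\sigma\in\s_{\tau^+}\}$ being upward directed, by the usual pasting of two stopping times along a set in $\f_v$).
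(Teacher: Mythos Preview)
Your strategy coincides with the paper's: the two opposite inequalities, the tower-property argument under the fixed $\p$ for ``$\geq$'', and the optimizing sequence $(\sigma_n,Z^n)$ for ``$\leq$'' together with Fatou/monotone convergence. Where you stop---the pasting of $Z^\p$ on $[v,\tau]$ with $Z^n$ on $[\tau,\sigma_n]$---is exactly where the paper makes its single nontrivial move. The paper observes that $\Gamma(\tau\mid\sigma_n,Z^n)$ depends only on $Z^n|_{[\tau,\sigma_n]}$, and therefore declares that one may \emph{without loss of generality} take the optimizing densities $Z^n\in\z$ to agree with $Z^\p$ on $[v,\tau]$. Under this normalization the mixed term collapses, $E^\p[\Gamma(\tau\mid\sigma_n,Z^n)\mid\f_v]=E^\q\big[\tfrac{Z^n_{\sigma_n}}{Z^n_v}Y(\sigma_n)\,\big|\,\f_v\big]$, which the paper then writes as $E^\p[Y(\sigma_n)\mid\f_v]$ and bounds by the essential supremum on the right.

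Two cautions if you adopt this route. First, the ``WLOG'' normalization tacitly uses that the pasted density (equal to $Z^\p$ on $[v,\tau]$ and to $Z^n$ thereafter) is again an element of $\z$; this is stability of $\pp$ under concatenation at stopping times, which is \emph{not} a consequence of $L^0$-convexity alone, so you should invoke it explicitly. Second---and this is precisely the difficulty you anticipated---after the normalization the quantity obtained is $E^{\p_n}[Y(\sigma_n)\mid\f_v]$ for the measure $\p_n$ with density $Z^n$, not $E^\p[Y(\sigma_n)\mid\f_v]$ for the \emph{fixed} $\p$ (since $Z^n$ need not agree with $Z^\p$ on $(\tau,\sigma_n]$). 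The paper's equality sign at this step glosses over the distinction, so your instinct that ``the resulting quantity does not overshoot the single-measure right-hand side'' is the real crux was well founded; that final domination does not fall out of the normalization by itself and needs either an additional hypothesis or a separate argument.
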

   
   \begin{proof}
   Let $\p\in \mathcal{P}$. Denote by $Z= Z^\p$. From Proposition 3, there exists a sequence $\lbrace (\tau_n, Z^n)\rbrace_{n \in \mathbb{N}}$ with $\tau_n$ in $\mathcal{S}_{\tau^+}$ and $Z^n \in \mathcal{Z}_{\tau,\tau_n}$, such that  
   \begin{align*}
R^+(v) = \lim\limits_{n \rightarrow \infty} \Gamma(\tau\vert \tau_n, Z^n) \quad \text{a.s.}
\end{align*}
We can suppose without loss of generality that $Z^n_u = Z_u$, $\forall u \in [v,\tau]$.
Using Fatou's lemma, we get
   \begin{align*}
   E^\p[R^+(\tau)\vert  \mathcal{F}_v]&= E^\q\Big[ \frac{Z_\tau}{Z_v} R^+(\tau) \Big\vert  \mathcal{F}_v \Big] \\
   &= E^\q \Big[ \frac{Z_\tau}{Z_v} \lim\limits_{n \rightarrow \infty} E^\q \Big[\frac{Z^n_{\tau_n}}{Z^n_\tau} Y(\tau_n) \Big\vert \mathcal{F}_\tau \Big] \Big\vert  \mathcal{F}_v \Big]\\
   &= E^\q\Big[ \lim\limits_{n \rightarrow \infty} E^\q\Big[ \frac{Z_\tau}{Z_v} \frac{Z^n_{\tau_n}}{Z^n_\tau} Y(\tau_n) \Big\vert \mathcal{F}_\tau \Big] \Big\vert  \mathcal{F}_v \Big]\\
   &\leq \lim\limits_{n \rightarrow \infty} E^\q \Big[\frac{Z^n_{\tau_n}}{Z^n_v} Y(\tau_n)  \Big\vert  \mathcal{F}_v \Big]\\
   &=  \lim\limits_{n \rightarrow \infty} E^\p[Y(\tau_n)  \vert  \mathcal{F}_v]\\
   &\leq \esssup_{ \sigma \in \mathcal{S}_{\tau^+} }  E^\p[Y(\sigma) \vert \mathcal{F}_v]. 
   \end{align*}
   Now from the $\mathcal{P}$-supermartinglae property of $R^+$, we have for all $\tau \in \mathcal{S}_v$ and all $\sigma \in \mathcal{S}_{\tau^+}$
   \begin{align*}
E^\p[Y(\sigma) \vert \mathcal{F}_\tau] \leq R^+(\tau) \quad \text{a.s.}
\end{align*} 
Thus, for each $\sigma \in \mathcal{S}_{\tau^+}$, we have 
\begin{align*}
E^\p[E^\p[Y(\sigma) \vert \mathcal{F}_\tau] \vert \mathcal{F}_v]= E^\p[Y(\sigma) \vert \mathcal{F}_v] \leq E^\p[R^+(\tau) \vert \mathcal{F}_v]  \quad \text{a.s.}
\end{align*}
By taking the essential supremum over $ \sigma \in \mathcal{S}_{\tau^+}$ we derive the reverse inequality:
\begin{align*}
\esssup_{ \sigma \in \mathcal{S}_{\tau^+} }  E^\p[Y(\sigma) \vert \mathcal{F}_v] \leq E^\p[R^+(\tau) \vert \mathcal{F}_v ] \quad \text{a.s.}
\end{align*}
The proof is complete.\\
   \end{proof}
      
   We now give a crucial property of regularity for the strict value function family, namely the $\pp$-right continuity along stopping times in expectation. Let us first introduce the following definition.
   
\begin{definition}
   An admissible family $Y= (Y(\tau), \tau \in \mathcal{S})$ is said to be $\pp$-right continuous in expectation ($\pp$-RCE) if for every $\tau \in \mathcal{S}$ and for any sequence of stopping times $(\tau_n)_{n\in \mathbb{N}}\in \mathcal{S}$ such that $\tau_n \downarrow v$, one has $E^\p[Y(\tau)] = \lim\limits_{n \rightarrow \infty} E^\p[Y(\tau_n)]$ with respect to each measure $\p \in \mathcal{P}.$ 
   \end{definition}
   
   \begin{proposition}($\pp$-RCE property for $R^+$) Let $(Y(\tau), \tau \in \mathcal{S})$ be an admissible family. The associated strict value function family $R^+ = (R^+(\tau), \tau \in \mathcal{S})$ is $\pp$-RCE.
   \end{proposition}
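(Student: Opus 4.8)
The plan is to reduce the statement to the scalar characterization of $E^\p[R^+(\cdot)]$ supplied by Proposition \ref{expectation of R}. Fix $\p \in \mathcal{P}$, a stopping time $\tau \in \mathcal{S}$, and a sequence $(\tau_n) \subset \mathcal{S}$ with $\tau_n \downarrow \tau$. The last assertion of Proposition \ref{expectation of R} gives
\begin{equation*}
E^\p[R^+(\rho)] = \sup_{\sigma \in \mathcal{S}_{\rho^+}} E^\p[Y(\sigma)], \qquad \rho \in \mathcal{S},
\end{equation*}
so, writing $f(\rho) := E^\p[R^+(\rho)]$, the required identity $E^\p[R^+(\tau)] = \lim_n E^\p[R^+(\tau_n)]$ becomes $f(\tau) = \lim_n f(\tau_n)$. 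I would establish the two inequalities separately.

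The inequality $\limsup_n f(\tau_n) \le f(\tau)$ is immediate from the $\mathcal{P}$-supermartingale property of $R^+$ established above: since $\tau \le \tau_n$, one has $E^\p[R^+(\tau_n) \mid \mathcal{F}_\tau] \le R^+(\tau)$ a.s., and taking expectations yields $f(\tau_n) \le f(\tau)$ for every $n$. (Applying the same comparison to $\tau_{n+1} \le \tau_n$ shows in addition that $(f(\tau_n))_n$ is non-decreasing, so the limit exists.)

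The substantive point is the reverse inequality $f(\tau) \le \liminf_n f(\tau_n)$, and this is where a suitable competing stopping time must be built. Fix an arbitrary $\sigma \in \mathcal{S}_{\tau^+}$; by definition $\sigma > \tau$ a.s. on $\{\tau < T\}$ and $\sigma = \tau$ a.s. on $\{\tau = T\}$, so that $\{\sigma > \tau\} = \{\tau < T\}$ a.s. For each $n$ set
\begin{equation*}
\sigma_n := \sigma \, \mathbf{1}_{\{\sigma > \tau_n\}} + T \, \mathbf{1}_{\{\sigma \le \tau_n\}}.
\end{equation*}
I would first verify that $\sigma_n$ is a stopping time and, more precisely, that $\sigma_n \in \mathcal{S}_{\tau_n^+}$: on $\{\sigma > \tau_n\}$ one has $\sigma_n = \sigma > \tau_n$, while on $\{\sigma \le \tau_n\}$ one has $\sigma_n = T \ge \tau_n$ with equality only on $\{\tau_n = T\}$, which is exactly the defining requirement of $\mathcal{S}_{\tau_n^+}$. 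Consequently $E^\p[Y(\sigma_n)] \le f(\tau_n)$ for every $n$.

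It then remains to pass to the limit in $E^\p[Y(\sigma_n)]$. Using admissibility of $Y$ to replace $Y(\sigma_n)$ by $Y(\sigma)$ on $\{\sigma > \tau_n\}$ and by $Y(T)$ on $\{\sigma \le \tau_n\}$, I would write
\begin{equation*}
E^\p[Y(\sigma_n)] = E^\p\big[Y(\sigma)\mathbf{1}_{\{\sigma > \tau_n\}}\big] + E^\p\big[Y(T)\mathbf{1}_{\{\sigma \le \tau_n\}}\big].
\end{equation*}
Since $\tau_n \downarrow \tau$, the events $\{\sigma > \tau_n\}$ increase to $\{\sigma > \tau\} = \{\tau < T\}$ while $\{\sigma \le \tau_n\}$ decrease to $\{\sigma \le \tau\} = \{\tau = T\}$; by monotone (resp. dominated) convergence, using integrability of $Y(\sigma)$ and $Y(T)$ under $\p$ and again admissibility (which identifies $Y(\sigma)$ with $Y(T)$ on $\{\tau = T\} = \{\sigma = T\}$), the right-hand side converges to $E^\p[Y(\sigma)\mathbf{1}_{\{\tau < T\}}] + E^\p[Y(\sigma)\mathbf{1}_{\{\tau = T\}}] = E^\p[Y(\sigma)]$. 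Hence $E^\p[Y(\sigma)] = \lim_n E^\p[Y(\sigma_n)] \le \liminf_n f(\tau_n)$, and taking the supremum over $\sigma \in \mathcal{S}_{\tau^+}$ gives $f(\tau) \le \liminf_n f(\tau_n)$; combined with the first inequality this yields $f(\tau) = \lim_n f(\tau_n)$, i.e.\ the $\mathcal{P}$-RCE property of $R^+$. I expect the only delicate step to be the construction and verification of $\sigma_n \in \mathcal{S}_{\tau_n^+}$ together with the careful bookkeeping on the boundary set $\{\tau = T\}$; the remainder is an application of the supermartingale property and standard convergence theorems.
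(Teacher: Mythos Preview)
Your argument is essentially the paper's own proof, carried out directly rather than by contradiction: both reduce to the identity $E^\p[R^+(\rho)]=\sup_{\sigma\in\mathcal S_{\rho^+}}E^\p[Y(\sigma)]$ from Proposition~\ref{expectation of R}, and both manufacture a competitor in $\mathcal S_{\tau_n^+}$ of the exact form $\sigma\,\mathbf 1_{\{\sigma>\tau_n\}}+T\,\mathbf 1_{\{\sigma\le\tau_n\}}$ to push $E^\p[Y(\sigma)]$ below $f(\tau_n)$ in the limit.

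One small repair: you invoke ``integrability of $Y(\sigma)$ and $Y(T)$ under $\p$'' to apply dominated convergence, but admissibility (Definition~\ref{admissible}) imposes only non-negativity and measurability, not integrability. This is harmless for the inequality you actually need: since $\{\sigma>\tau_n\}\uparrow\{\tau<T\}$, monotone convergence gives $E^\p[Y(\sigma)\mathbf 1_{\{\sigma>\tau_n\}}]\uparrow E^\p[Y(\sigma)\mathbf 1_{\{\tau<T\}}]$, while $\{\sigma\le\tau_n\}\supseteq\{\tau=T\}$ yields $E^\p[Y(T)\mathbf 1_{\{\sigma\le\tau_n\}}]\ge E^\p[Y(\sigma)\mathbf 1_{\{\tau=T\}}]$ for every $n$; adding and taking $\liminf$ already gives $\liminf_n E^\p[Y(\sigma_n)]\ge E^\p[Y(\sigma)]$, which is all you use. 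With that adjustment the proof is complete.
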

   
   \begin{proof}
   It is easily seen that for each $\p \in \mathcal{P}$, the function $\tau \rightarrow E^\p[R^+(\tau)]$ is a non-increasing function of stopping times, since $R^+$ is a $\mathcal{P}$-supermartingal family. Suppose, contrary to our claim, that there exists a probability measure $\bar{\p} \in \mathcal{P}$ such that the family $(R^+(\tau), \tau \in \mathcal{S})$ is not RCE at $\tau \in \mathcal{S}$. Consider first the case when $E^{\bar{\p}} [R^+(\tau)] <\infty$. Hence there exists a constant $\epsilon>0$ and a sequence of stopping times $(\tau_n)_{n\in \mathbb{N}}\in \mathcal{S}$ such that $\tau_n \downarrow \tau$  and 
  \begin{equation*}
  \lim\limits_{n \rightarrow \infty} \uparrow E^{\bar{\p}} [R^+(\tau_n)] + \epsilon \leq E^{\bar{\p}}[R^+(\tau)].
  \end{equation*}
  
  From Proposition \ref{expectation of R}, we have $E^{\bar{\p}}[R^+(\tau)]= \sup\limits_{\sigma \in \mathcal{S}_{\tau^+}} E^{\bar{\p}}[Y(\sigma)]$, thus, there exists $\tau' \in \mathcal{S}_{\tau^+} $ such that 
\begin{align} \label{tau'}
\lim\limits_{n \rightarrow \infty} \uparrow E^{\bar{\p}} [R^+(\tau_n)] + \frac{\epsilon}{2}  \leq E^{\bar{\p}}[Y(\tau')].
\end{align}
In order to find a contradiction, suppose at first that $\tau < T$ a.s. In this case, $\tau' \in \mathcal{S}_{\tau^+}$ gives $\tau' > \tau$ a.s. We write $ \lbrace \tau' > \tau \rbrace=  \bigcup\limits_{n\in\mathbb{N}} \uparrow \lbrace \tau' > \tau_n\rbrace $ and we get $E^{\bar{\p}}[Y(\tau')]= \lim\limits_{n \rightarrow \infty} \uparrow E^{\bar{\p}} [Y(\tau') \mathbf{1}_{\tau' >\tau_n}]$.
Then, there exists $n_0$ such that 
 \begin{align*}
 \lim\limits_{n \rightarrow \infty} \uparrow E^{\bar{\p}} [R^+(\tau_n)] + \frac{\epsilon}{4} \leq E^{\bar{\p}}[Y(\tau') \mathbf{1}_{\tau' >\tau_{n_0}}].
 \end{align*}
 
 Set $\bar{\tau}:= \tau' \mathbf{1}_{\tau' >\tau_{n_0}} + T \mathbf{1}_{\tau' \leq \tau_{n_0}}$. One can see that $\bar{\tau} > \tau_{n_0}$ a.s. Therefore, the positivity of $Y$ yields 
\begin{align} \label{es} E^{\bar{\p}}[R^+(\tau_{n_0})] + \frac{\epsilon}{4} \leq \lim\limits_{n \rightarrow \infty} \uparrow E^{\bar{\p}} [R^+(\tau_n)] + \frac{\epsilon}{4} \leq E^{\bar{\p}}[Y(\bar{\tau})] \leq E^{\bar{\p}}[R^+(\tau_{n_0})],\end{align} 
  which is impossible. To study the general case, take $ \tau \in \mathcal{S}$. Since $\tau' \in \mathcal{S}_{\tau^+}$, $\tau' > \tau$ a.s. on $\lbrace  \tau < T\rbrace$ and $\tau' =T$ a.s. on $\lbrace  \tau = T\rbrace$. Then, one has  
\begin{align*}
&E^{\bar{\p}}[Y(\tau')]=E^{\bar{\p}}[Y(\tau') \mathbf{1}_{\tau < T}] + E^{\bar{\p}}[Y(T) \mathbf{1}_{\tau = T}]\quad \text{and}\\
&E^{\bar{\p}}[Y(\tau') \mathbf{1}_{\tau <T}]= \lim\limits_{n \rightarrow \infty} \uparrow E^{\bar{\p}}[Y(\tau') \mathbf{1}_{\lbrace \tau <T \rbrace \cap \lbrace \tau' > \tau_n \rbrace  }].
\end{align*} 
From this and \eqref{tau'} there exists $n_0$ such that
 \begin{align*}
 \lim\limits_{n \rightarrow \infty} \uparrow E^{\bar{\p}} [R^+(\tau_n)] + \frac{\epsilon}{4} \leq E^{\bar{\p}}[Y(\tau') \mathbf{1}_{\tau' >\tau_{n_0} \cap \lbrace \tau < T \rbrace}] + E^{\bar{\p}}[Y(T) \mathbf{1}_{\tau = T}].
 \end{align*} 
 Consider $\bar{\tau}:= \tau' \mathbf{1}_{\lbrace \tau' > \tau_{n_0} \rbrace \cap \lbrace \tau < T \rbrace} + T \mathbf{1}_{\lbrace \tau' \leq \tau_{n_0} \rbrace  \cap \lbrace \tau < T \rbrace} + T \mathbf{1}_{\lbrace \tau = T \rbrace}$.  One can check that $\bar{\tau} \in \mathcal{S}_{\tau_{n_0}^+}$, then
 \begin{align*}
 E^{\bar{\p}}[Y(\tau') \mathbf{1}_{\lbrace \tau' >\tau_{n_0} \rbrace \cap \lbrace \tau < T \rbrace}] + E^{\bar{\p}}[Y(T) \mathbf{1}_{\lbrace \tau = T \rbrace}] \leq + E^{\bar{\p}}[Y(\bar{\tau})] \leq  E^{\bar{\p}}[R^+(\tau_{n_0})],
 \end{align*}
 we derive again \eqref{es} which gives a contradiction.\\
   \end{proof}
   
   The following definition plays a fundamental role in solving the optimal stopping problem \eqref{eq}.
   
    \begin{definition}
An admissible family $(Y(\tau), \tau \in \s)$ is said to be $\pp$-right (resp. $\pp$-left) upper-semicontinuous in expectation along stopping times if for all $\p \in \pp$, for all $\tau \in \s$ and for all sequences of stopping times $(\tau_n)_n$ such that $\tau_n \downarrow \tau$ (resp. $\tau_n \uparrow \tau$), we have
\begin{align*}
E^\p[Y(\tau)] \geq \lim\limits_{n \rightarrow  \infty}\sup E^\p[Y(\tau_n)].
\end{align*}
\end{definition}

\begin{remark} \label{USCE 1A}
Let $(Y(\tau), \tau \in \s)$ be a $\pp$-right (resp. $\pp$-left) USCE admissible family. For each $v \in \s$ and  $B \in \f_v$, the admissible family $(Y(\tau) \mathbf{1}_B, \tau \in \s_v)$ is also $\pp$-right (resp. $\pp$-left) USCE admissible family. Indeed, fix $\p \in \pp$ and $v\in \s$. Let $(\tau_n)_n$ be a sequence of stopping times such that $\tau_n \downarrow \tau$. For each $n$, set $\bar{\tau}_n:= \tau_n \mathbf{1}_B + T \mathbf{1}_{B^c}$ and $\bar{\tau}:= \tau \mathbf{1}_B + T \mathbf{1}_{B^c}$. It is easily seen that $\bar{\tau}_n \downarrow \tau$. Therefore, $E^\p[Y(\bar{\tau})] \geq \lim\limits_{n \rightarrow  \infty}\sup E^\p[Y(\bar{\tau}_n)].$
Hence, $E^\p[Y(\tau)\mathbf{1}_B] \geq \lim\limits_{n \rightarrow  \infty}\sup E^\p[Y(\tau_n)\mathbf{1}_B].$
\end{remark}
  
We give now necessary and sufficient conditions for optimality.

\begin{theorem}
Let $\p^* \in \mathcal{P}$ and $\tau^* \in\mathcal{S}$ be such that $E^{\p^*}[Y(\tau^*)]< \infty$. The stopping time $\tau^*$ and the probability measure $\p^*$ are optimal in \eqref{eq1*}, i.e., 
\begin{align} \label{opt}
E^{\p^*}[Y(\tau^*)]= R(0)= \sup_{\p \in \mathcal{P}} \sup_{\tau \in \mathcal{S}} E^\p[Y(\tau)]
\end{align}
holds, if and only if 
\begin{enumerate}
\item $R(\tau^*)= Y(\tau^*)$ a.s., \label{1}
\item \label{2} The $\mathcal{P}$-supermartingale family $(R(\tau^*  \wedge \sigma), \sigma \in \mathcal{S} )$ is a $\p^*$-martingale family, that is, for all $\theta, \theta' \in\mathcal{S}$ such that $\theta, \theta' \leq \tau^*$ a.s., we have 
\begin{align*}
    E^{\p^*}[R(\theta') \vert \mathcal{F}_\theta]= R(\theta) \quad \text{a.s. on} \quad \lbrace \theta \leq \theta'\rbrace.
\end{align*}
\end{enumerate}
\end{theorem}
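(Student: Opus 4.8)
The plan is to prove the two implications separately, in each case reducing everything to scalar comparisons of $\p^*$-expectations. The recurring engine is that $R=(R(v),v\in\mathcal{S})$ is a $\mathcal{P}$-supermartingale family dominating $Y$ (established in the preceding propositions), that $\mathcal{F}_0$ is trivial so $R(0)$ is deterministic and coincides with $\sup_{\p\in\mathcal{P}}\sup_{\tau\in\mathcal{S}}E^{\p}[Y(\tau)]$, and that the standing assumption $E^{\p^*}[Y(\tau^*)]<\infty$ keeps all the relevant random variables $\p^*$-integrable.

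For sufficiency, assume \ref{1} and \ref{2}. I would apply the $\p^*$-martingale property \ref{2} to the pair $\theta=0$, $\theta'=\tau^*$ (both dominated by $\tau^*$, with $\{0\le\tau^*\}=\Omega$), obtaining $E^{\p^*}[R(\tau^*)\mid\mathcal{F}_0]=R(0)$ a.s.; taking $\p^*$-expectations and using that $R(0)$ is constant yields $E^{\p^*}[R(\tau^*)]=R(0)$. Condition \ref{1} then replaces $R(\tau^*)$ by $Y(\tau^*)$, giving $E^{\p^*}[Y(\tau^*)]=R(0)=\sup_{\p}\sup_{\tau}E^{\p}[Y(\tau)]$, which is precisely \eqref{opt}.

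For necessity, assume \eqref{opt}. The first step is the sandwich
\[
R(0)=E^{\p^*}[Y(\tau^*)]\le E^{\p^*}[R(\tau^*)]\le R(0),
\]
where the first inequality is $Y(\tau^*)\le R(\tau^*)$ and the second is the $\p^*$-supermartingale inequality between $0$ and $\tau^*$. Hence $E^{\p^*}[Y(\tau^*)]=E^{\p^*}[R(\tau^*)]$, and since $Y(\tau^*)\le R(\tau^*)$ a.s. with equal finite expectations, $R(\tau^*)=Y(\tau^*)$ a.s., which is \ref{1}. The same sandwich applied to any $\sigma\in\mathcal{S}$ with $\sigma\le\tau^*$ (supermartingale between $0,\sigma$ and between $\sigma,\tau^*$) gives $E^{\p^*}[R(\sigma)]=R(0)$, i.e. $\sigma\mapsto E^{\p^*}[R(\sigma)]$ is constant, equal to $R(0)$, on $\{\sigma\le\tau^*\}$.

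For condition \ref{2}, I would first treat ordered times $\theta\le\theta'\le\tau^*$: the $\p^*$-supermartingale inequality gives $E^{\p^*}[R(\theta')\mid\mathcal{F}_\theta]\le R(\theta)$ a.s., while both sides have the same $\p^*$-expectation $R(0)$ by the previous step, so a pointwise inequality between integrable variables with equal expectations forces $E^{\p^*}[R(\theta')\mid\mathcal{F}_\theta]=R(\theta)$ a.s. To remove the ordering for general $\theta,\theta'\le\tau^*$, I would set $\rho:=\theta\vee\theta'\in\mathcal{S}$ (so $\theta\le\rho\le\tau^*$) and $A:=\{\theta\le\theta'\}$; invoking the standard fact $A\in\mathcal{F}_\theta$ together with the admissibility of $R$ (Proposition \ref{r r+ admiss}, so $R(\rho)=R(\theta')$ a.s. on $A$), I multiply the identity $E^{\p^*}[R(\rho)\mid\mathcal{F}_\theta]=R(\theta)$ by $\mathbf{1}_A$ and pull it inside the conditional expectation to get $E^{\p^*}[R(\theta')\mid\mathcal{F}_\theta]=R(\theta)$ a.s. on $\{\theta\le\theta'\}$, which is \ref{2}. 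The analytic content here is genuinely elementary since everything collapses once the two extremal expectations coincide with $R(0)$; the only point requiring care is the localization for \ref{2}, namely the measurability check $\{\theta\le\theta'\}\in\mathcal{F}_\theta$ and the use of admissibility to transfer the equality from $\rho=\theta\vee\theta'$ back to $\theta'$ on that event, and I expect this bookkeeping — rather than any estimate — to be the main thing to get right.
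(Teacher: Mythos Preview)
Your proof is correct and follows essentially the same approach as the paper's: both directions collapse to scalar comparisons of $\p^*$-expectations, using the $\mathcal{P}$-supermartingale property of $R$ and the domination $R\ge Y$ to form a sandwich forcing equalities. Your necessity argument is in fact slightly more streamlined than the paper's (you bypass Proposition~\ref{expectation of R} and work directly with the supermartingale inequality $E^{\p^*}[R(\tau^*)]\le R(0)$), and you spell out more carefully the passage from constant expectations to the conditional martingale identity in \ref{2}, which the paper leaves implicit.
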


\begin{proof}
Suppose $\tau^*$ and $\p^*$ are optimal, i.e. \eqref{opt} holds. From Proposition \ref{expectation of R}, we have  
\begin{align*} 
E^{\p^*}[R(\tau^*)]&= \sup_{\sigma \in \mathcal{S}_\tau^*} E^{\p^*}[Y(\sigma)] \leq \sup_{\p\in \mathcal{P}} \sup_{\sigma \in \mathcal{S}_\tau^*} E^{\p}[Y(\sigma)]\\
&\leq \sup_{\p\in \mathcal{P}} \sup_{\sigma \in \mathcal{S}} E^{\p}[Y(\sigma)] = E^{\p^*}[Y(\tau^*)]\\
&\leq E^{\p^*}[R(\tau^*)] \quad \text{a.s.}
\end{align*}
Since $R$ dominates $Y$ and  $E^{\p^*}[Y(\tau^*)]< \infty$, we get $R(\tau^*)= Y(\tau^*)$ $\p^*$-a.s. and the first assertion follows. 
Let us prove the second assertion. For this, take $\sigma \in \mathcal{S}$ and notice that 
\begin{align} 
E^{\p^*}[Y(\tau^*)]= \sup_{\p \in \mathcal{P}} \sup_{\tau \in \mathcal{S}} E^\p[Y(\tau)] = \sup_{\p \in \mathcal{P}} \sup_{\tau \in \mathcal{S}_{\sigma \wedge \tau^*}} E^\p[Y(\tau)]. \label{eq3}
\end{align}
Moreover, from Proposition \ref{expectation of R} we have 
\begin{align*}
\sup_{\p\in \mathcal{P}} E^{\p}[R(\sigma \wedge \tau^*)]= \sup_{\p \in \mathcal{P}} \sup_{\tau \in \mathcal{S}_{\sigma \wedge \tau^*}} E^\p[Y(\tau)] =  E^{\p^*}[Y(\tau^*)].
\end{align*}
From this and the supermartingale property of $R$, we get 
\begin{align*}
E^{\p}[R(\sigma \wedge \tau^*)] \leq  \sup_{\p \in \mathcal{P}} E^{\p}[R(\sigma \wedge \tau^*)]=E^{\p^*}[Y(\tau^*)] \leq  E^{\p^*}[R(\tau^*)]  \leq  E^{\p^*}[R(\tau^* \wedge \sigma)].
\end{align*}
It follows that, $E^{\p}[R(\sigma \wedge \tau^*)] = E^{\p^*}[R(\tau^*)]$, which gives the second assertion.
Conversely, from \ref{1} and \ref{2} we have
\begin{align*}
E^{\p^*}[R(\tau^*)]= E^{\p^*}[R(0)]= R(0) = E^{\p^*}[Y(\tau^*)]= \sup_{\p \in \mathcal{P}} \sup_{\tau \in \mathcal{S}} E^\p[Y(\tau)],
\end{align*}
the proof is thus complete. 

\end{proof}

%

\section{Existence of Optimal Stopping Times} \label{s4}

  In this section, we establish the existence of optimal stopping times using an approximation method introduced by Maingueneau \cite{Maingueneau}  (see also El Karoui \cite{El Karoui}). We begin by constructing a family of \textit{approximatly optimal} stopping times. Fix $v\in \s$. For $\alpha \in (0,1)$, a stopping time $\tau_\alpha^*$ is said to be  $(1-\alpha)$-optimal for $R(v)$ if it satisfies,
\begin{equation} \label{1-alpha}
\alpha R(v) \leq \esssup_{\p \in \pp} E^\p [ Y(\tau_\alpha^*) \vert
\f_v].
\end{equation}
Passing to the limit later when $\alpha \uparrow 1$ implies the existence of an optimal stopping time. Let us now define for $\alpha \in (0,1)$, the following $\f_v$-measurable random variable
\begin{align*} \label{U}
\U &:= \essinf \lbrace \tau \in \s_v, \alpha R(\tau) \leq Y(\tau) \quad \text{a.s.} \rbrace. \numberthis  
\end{align*}
\begin{remark} \label{rema}
\begin{enumerate}
    \item \label{L^alpha} The random variable $\U$ is a stopping time, and one has $\U \geq v$ a.s. Indeed, set $\lo := \lbrace \tau \in \s_v, \alpha R(\tau) \leq Y(\tau) \quad \text{a.s.}\rbrace$. Let $\tau_1$, $\tau_2 \in \lo$ and set $\bar{\tau}:= \tau_1  \vee \tau_2 \in \s_v$. We have
\begin{align*}
\alpha R(\bar{\tau})&= \alpha R(\tau_1) \mathbf{1}_{\lbrace \tau_1 \geq \tau_2 \rbrace} +  \alpha R(\tau_2) \mathbf{1}_{\lbrace \tau_1 < \tau_2 \rbrace} \\
& \leq Y(\tau_1) \mathbf{1}_{\lbrace \tau_1 \geq \tau_2 \rbrace} +  Y(\tau_2) \mathbf{1}_{\lbrace \tau_1 < \tau_2 \rbrace} \quad \text{a.s.}\\
&= Y(\bar{\tau}) \quad \text{a.s.}
\end{align*}
Thus, $\lo$ is stable by pairwise minimization. From \cite{Neveu}, there exists a sequence of stopping times $(\tau_n)_n \in \lo$ such that $\tau_n \downarrow \U$. Therefore, $\U$ is a stopping time and $\U \geq v$ a.s.
\item \label{U=v} Let $\alpha \in (0,1)$. Let $v\in \s$ such that $\alpha R(v) \leq Y(v)$ a.s. We have $\U = v$ a.s. Indeed, set $\bar{A}:= \lbrace \alpha R(v) \leq Y(v) \rbrace$ and $\bar{v}:= v \mathbf{1}_{\bar{A}} + T  \mathbf{1}_{ \bar{A}^c}$. Clearly, $\bar{v} \in \lo$. By definition of $\U$, we have $\U \leq \bar{v}$ a.s. Therefore,  $\U \mathbf{1}_{\bar{A}} \leq \bar{v}\mathbf{1}_{\bar{A}} = v \mathbf{1}_{\bar{A}}$ a.s., and hence  $\U = v$ a.s.
\end{enumerate}
\end{remark}

The following proposition holds.

\begin{proposition} \label{prop4.1}
Assume that $R<\infty$ and that the reward family $(Y(\tau), \tau \in \s)$ is $\pp$-right USCE. Then, for each $v \in \s$ and $\alpha \in (0,1)$, the stopping time $\U$, defined by \eqref{U}, satisfies  
\begin{align} \label{alphaR<Y}
\alpha R(\U) \leq Y(\U) \quad \text{a.s.}
\end{align}
\end{proposition}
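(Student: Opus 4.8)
The plan is to reduce the inequality to the strict value family $R^+$, whose regularity is already at our disposal. Since $R(\U)=Y(\U)\vee R^+(\U)$ by Proposition \ref{R=YvR+}, and since $0<\alpha<1$ together with $Y\geq 0$ forces $\alpha Y(\U)\leq Y(\U)$, we obtain from $\alpha R=(\alpha Y)\vee(\alpha R^+)$ the elementary equivalence
\begin{equation*}
\alpha R(\U)\leq Y(\U)\quad\Longleftrightarrow\quad \alpha R^+(\U)\leq Y(\U)\qquad\text{a.s.}
\end{equation*}
It therefore suffices to establish $\alpha R^+(\U)\leq Y(\U)$ a.s., for which the $\pp$-RCE of $R^+$ is the natural tool.

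By Remark \ref{rema} I would pick a sequence $(\tau_n)_n\subset\lo$ with $\tau_n\downarrow \U$; each $\tau_n$ satisfies $\tau_n\geq \U$ and $\alpha R(\tau_n)\leq Y(\tau_n)$ a.s., whence $\alpha R^+(\tau_n)\leq Y(\tau_n)$ a.s. (using $R^+\leq R$). Fixing $\p\in\pp$ and an arbitrary $B\in\f_{\U}$, multiplying by $\mathbf{1}_B$ and integrating gives $\alpha E^\p[R^+(\tau_n)\mathbf{1}_B]\leq E^\p[Y(\tau_n)\mathbf{1}_B]$ for every $n$. The strategy is then to let $n\to\infty$ and compare the $\limsup$ of the two sides.

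For the right-hand side, the family $(Y(\tau)\mathbf{1}_B,\ \tau\in\s_\U)$ is $\pp$-right USCE by Remark \ref{USCE 1A}, so $\limsup_n E^\p[Y(\tau_n)\mathbf{1}_B]\leq E^\p[Y(\U)\mathbf{1}_B]$. For the left-hand side I would upgrade the $\pp$-RCE of $R^+$ to a localized version carrying the indicator: setting $\tau_n^B:=\tau_n\mathbf{1}_B+T\mathbf{1}_{B^c}$ and $\U^B:=\U\mathbf{1}_B+T\mathbf{1}_{B^c}$, one checks $\tau_n^B\downarrow \U^B$, while admissibility of $R^+$ (Proposition \ref{r r+ admiss}) yields $R^+(\tau_n^B)=R^+(\tau_n)\mathbf{1}_B+R^+(T)\mathbf{1}_{B^c}$ and the analogous identity for $\U^B$. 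Applying the plain $\pp$-RCE of $R^+$ to $\tau_n^B\downarrow\U^B$ and cancelling the common finite term $E^\p[R^+(T)\mathbf{1}_{B^c}]$ then gives $\lim_n E^\p[R^+(\tau_n)\mathbf{1}_B]=E^\p[R^+(\U)\mathbf{1}_B]$. This transfer from the global statement to the indicator-weighted one is the step I expect to demand the most care, since it is where the admissibility bookkeeping and the integrability must be marshalled cleanly.

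Combining the two passages to the limit yields $\alpha E^\p[R^+(\U)\mathbf{1}_B]\leq E^\p[Y(\U)\mathbf{1}_B]$ for every $B\in\f_\U$. Taking $B=\{\alpha R^+(\U)>Y(\U)\}\in\f_\U$ forces $\p(B)=0$; here the integrability $E^\p[R^+(\U)]\leq E^\p[R(\U)]\leq R<\infty$, guaranteed by the hypothesis $R<\infty$ and the $\pp$-supermartingale property of $R$, is what legitimizes the conclusion. Hence $\alpha R^+(\U)\leq Y(\U)$ a.s., and by the equivalence above $\alpha R(\U)\leq Y(\U)$ a.s., as claimed.
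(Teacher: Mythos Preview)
Your proof is correct and follows essentially the same route as the paper's: pick a minimizing sequence $(\tau_n)_n\subset\lo$ with $\tau_n\downarrow\U$, pass to the limit using the $\pp$-RCE of $R^+$ on the left and the $\pp$-right USCE of $Y$ (localized via Remark \ref{USCE 1A}) on the right, and conclude by testing against sets in $\f_{\U}$. The only difference is organizational: the paper splits on the set $B_\alpha=\{R(\U)>Y(\U)\}$ (where $R=R^+$) and builds an auxiliary sequence $\tilde{\tau}_n$ mixing $\tau_n$ and $\U$, whereas your opening reduction $\alpha R(\U)\leq Y(\U)\iff\alpha R^+(\U)\leq Y(\U)$ lets you work with $R^+$ throughout and avoid that case distinction --- a modest but genuine streamlining.
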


\begin{proof}
Let $v \in \s$ and $A \in \f_{\U}$. Recall that, by Proposition \ref{R=YvR+}, on the set $B_\alpha:=\lbrace R(\U) > Y(\U) \rbrace$ we have $R(\U)=R^+(\U)$ a.s. Using the $\pp$-RCE property of $R^+$, it is easily seen that the family $(R^+(\tau) \mathbf{1}_{B_\alpha}, \tau \in \s_{\U})$ is also $\pp$-RCE. From Remark \ref{L^alpha}, there exists a minimizing sequence $(\tau_n)_n \in \lo$ such that $\tau_n \downarrow \U$. Hence, for each $\p \in \pp$, the following equality holds
\begin{align*}
\alpha E^\p[R^+(\U) \mathbf{1}_{B_\alpha \cap A} ] = \alpha  \lim\limits_{n \rightarrow \infty} E^\p[R^+(\tau_n) \mathbf{1}_{B_\alpha \cap A}].
\end{align*}
Since $(\tau_n)_n \in \lo$ and $R^+ \leq R$, we have for each $n$; $\alpha R^+(\tau_n) \leq \alpha R(\tau_n) \leq Y(\tau_n)$ a.s. Therefore, 
\begin{align} \label{eqlim}
\alpha E^\p[R^+(\U) \mathbf{1}_{B_\alpha \cap A }] \leq \alpha  \limsup\limits_{n \rightarrow \infty} E^\p[Y(\tau_n) \mathbf{1}_{B_\alpha \cap A}].
\end{align}
Hence, from the positivity of $Y$ and Eq. \eqref{eqlim}, we get
\begin{align*}
\alpha E^\p[R(\U) \mathbf{1}_A]&= \alpha E^\p[R^+(\U) \mathbf{1}_{B_\alpha \cap A }] + \alpha E^\p[Y(\U) \mathbf{1}_{B^c_\alpha \cap A}] \\
&\leq \limsup\limits_{n \rightarrow \infty} E^\p[Y(\tau_n) \mathbf{1}_{B_\alpha \cap A}] + \alpha E^\p[Y(\U) \mathbf{1}_{B^c_\alpha \cap A}] \\
&\leq  \limsup\limits_{n \rightarrow \infty} E^\p[Y(\tilde{\tau}_n) \mathbf{1}_{A}],
\end{align*}
where $\tilde{\tau}_n:= \tau_n \mathbf{1}_{B_\alpha \cap A} + \U \mathbf{1}_{B^c_\alpha \cap A} + \U \mathbf{1}_{A^c}$. The sequence $(\tilde{\tau}_n)_n$ is a non-increasing sequence of stopping times that tends to $\U$ as $n \rightarrow \infty$. Thus, using the $\pp$-right USCE assumption on the reward family $Y$ and Remark \ref{USCE 1A}, we obtain 
\begin{align*}
\alpha E^\p[R(\U) \mathbf{1}_A] \leq E^\p[Y(\U) \mathbf{1}_A],
\end{align*}
for each $A\in \f_{\U}$ and each $\p \in \pp$. Consequently, $\alpha R(\U) \leq Y(\U)$ a.s. and the proof is ended.\\
\end{proof}

Let $v\in \s$ and $\alpha \in (0,1)$. In the sequel, we show that $\U$ defined by \eqref{U} is a $(1-\alpha)$-optimal stopping time for $R(v)$. More precisely,

\begin{theorem}
Assume that the reward $(Y(\tau), \tau \in \s)$ is $\pp$-right USCE and $R< \infty$. Let $v \in \s$. For each $\alpha \in (0,1)$, the stopping time $\U$ is a $(1-\alpha)$-optimal stopping time for $R(v)$.
\end{theorem}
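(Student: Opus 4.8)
The plan is to show that the stopping time $\U$ defined by \eqref{U} satisfies the $(1-\alpha)$-optimality condition \eqref{1-alpha}, namely $\alpha R(v) \leq \esssup_{\p \in \pp} E^\p[Y(\U) \vert \f_v]$. The key structural fact already available is Proposition \ref{prop4.1}, which gives $\alpha R(\U) \leq Y(\U)$ a.s. Since $R(\U) \geq \esssup_{\p \in \pp} E^\p[Y(\U)\vert \f_v]$ is not quite what we want (we need a lower bound on $R(v)$, not $R(\U)$), the real content is to relate $R(v)$ to $R(\U)$ through a \emph{strict-optimality / no-gain-before-$\U$} argument: before the threshold time $\U$, stopping is strictly suboptimal, so the value $R(v)$ is attained (up to the factor $\alpha$) by continuing until $\U$ and then collecting $Y(\U)$.

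Concretely, the first step I would carry out is to establish that on the stochastic interval $\textbf{[}v, \U\textbf{)}$ the reward is dominated by $\alpha R$, i.e. for every $\tau \in \s_v$ one has $\alpha R(\tau) \leq Y(\tau)$ fails strictly before $\U$; by the very definition of $\U$ as an essential infimum, any stopping time $\tau$ with $v \leq \tau < \U$ on a positive set satisfies $Y(\tau) < \alpha R(\tau)$ there. This says the reward family offers no advantage to stopping early, which is the mechanism that forces the $\pp$-supermartingale $R$ to behave like a $\pp$-martingale up to $\U$. The second step is to exploit this to show $\alpha R(v) \leq \esssup_{\p\in\pp} E^\p[\alpha R(\U)\vert \f_v]$. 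The natural tool is the optimizing-sequence construction (Proposition 3) together with Proposition \ref{expectation of R}: one picks a near-optimal pair $(\tau_n, Z^n)$ for $R(v)$, splits each $\tau_n$ at $\U$ by writing $\tau_n = \tau_n \mathbf{1}_{\{\tau_n < \U\}} + \tau_n\mathbf{1}_{\{\tau_n \geq \U\}}$, and uses the fact that on $\{\tau_n < \U\}$ the contribution $\Gamma(v\vert \tau_n, Z^n)$ is controlled by $\alpha R$ evaluated at a stopping time $\geq \U$, pushing the mass forward past $\U$.

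Combining the two steps, I would conclude the chain
\begin{align*}
\alpha R(v) \leq \esssup_{\p \in \pp} E^\p[\alpha R(\U) \vert \f_v] \leq \esssup_{\p \in \pp} E^\p[Y(\U) \vert \f_v],
\end{align*}
where the last inequality is exactly Proposition \ref{prop4.1} after taking conditional expectations (monotonicity of $E^\p[\cdot \vert \f_v]$ and of the essential supremum). This precisely matches the definition \eqref{1-alpha} of a $(1-\alpha)$-optimal stopping time, completing the proof.

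The main obstacle I expect is the middle step: rigorously transporting the value $R(v)$ across the random barrier $\U$ while respecting the two layers of essential suprema (over $\tau \in \s_v$ and over $Z \in \mathcal{Z}$). The delicate point is that $\U$ is itself defined through an essential infimum, so one must argue that an optimizing sequence for $R(v)$ can be \emph{replaced} by stopping times lying in $\s_{\U}$ without losing more than a factor $\alpha$; this requires using the definition of $\U$ to guarantee that any early-stopping portion $\{\tau_n < \U\}$ contributes strictly less than $\alpha R$, and then re-routing that portion to $\U$ via the $\pp$-supermartingale inequality for $R$ and the pairwise-maximization stability established in Proposition 3. Handling the measurability and the simultaneity over all $\p \in \pp$ (equivalently over $Z \in \mathcal{Z}$), using the $L^0$-convexity of $\mathcal{Z}$, is where the technical care is concentrated.
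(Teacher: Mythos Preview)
Your overall architecture is right and matches the paper's: one wants the key identity
\[
R(v)=\esssup_{\p\in\pp}E^\p\!\bigl[R(\U)\,\big|\,\f_v\bigr]=:\bar S(v)\qquad\text{a.s.,}
\]
and then Proposition \ref{prop4.1} immediately gives $\alpha R(v)=\esssup_{\p}E^\p[\alpha R(\U)\mid\f_v]\le \esssup_{\p}E^\p[Y(\U)\mid\f_v]$. The final step is exactly as in the paper.

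The gap is in your ``middle step''. Your plan is to take an optimizing sequence $(\tau_n,Z^n)$ for $R(v)$, split on $\{\tau_n<\U\}$ versus $\{\tau_n\ge\U\}$, and ``push the mass forward past $\U$'' on the first event. On $\{\tau_n<\U\}$ one indeed has $Y(\tau_n)\le\alpha R(\tau_n)$ (your first observation is correct and can be justified via the essential-infimum definition of $\U$). But $\tau_n<\U$ there, so to reach a quantity evaluated at $\U$ you would need an inequality of the form $R(\tau_n)\le E^\p[R(\U)\mid\f_{\tau_n}]$; this is exactly the \emph{wrong} direction of the $\pp$-supermartingale property of $R$. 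If instead you bound $E^{\p_n}[R(\tau_n)\mathbf 1_{\{\tau_n<\U\}}\mid\f_v]\le R(v)$ (the correct direction), the splitting yields only
\[
\Gamma(v\mid\tau_n,Z^n)\le \alpha\,R(v)+E^{\p_n}\!\bigl[R(\U)\mathbf 1_{\{\tau_n\ge\U\}}\,\big|\,\f_v\bigr]\le \alpha R(v)+\bar S(v),
\]
hence $(1-\alpha)R(v)\le\bar S(v)$, which is strictly weaker than $R(v)\le\bar S(v)$ and does not deliver \eqref{1-alpha}. There is no evident iteration or re-routing through the $L^0$-convexity of $\mathcal Z$ that closes this gap.

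The paper avoids this obstruction by a different device: it first shows (your missing ingredient) that $\bigl(\bar S(v)\bigr)_{v\in\s}$ is itself a $\pp$-supermartingale family, and then considers the convex combination $\alpha R+(1-\alpha)\bar S$. Using Remark~\ref{rema}\eqref{U=v}, on $\{\alpha R(v)\le Y(v)\}$ one has $\U=v$ and hence $\bar S(v)=R(v)$, so $\alpha R(v)+(1-\alpha)\bar S(v)=R(v)\ge Y(v)$; on the complement, $\alpha R(v)>Y(v)$ already dominates. Thus $\alpha R+(1-\alpha)\bar S$ is a $\pp$-supermartingale family dominating $Y$, and the \emph{Snell-envelope minimality} of $R$ forces $\alpha R+(1-\alpha)\bar S\ge R$, i.e.\ $\bar S\ge R$. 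This is the step you should replace your splitting argument with; once $\bar S(v)\ge R(v)$ is in hand, your final chain goes through verbatim.
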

\begin{remark}
Note that the integrability condition $R<\infty$ is a weaker condition compared to the classical assumption made in the literature, namely, that $E^\p[\sup\limits_{\tau \in \s} Y(\tau)] < \infty$, $\forall \p \in \pp$ (see e.g. \cite{Zam}).
\end{remark}

\begin{proof}
We have to prove inequality \eqref{1-alpha} for $\U$. This will be done by means of the following steps. \\
\textit{Step 1.} Let us show that for each $\alpha \in (0,1)$ and each $v\in S$, we have 
\begin{align} \label{Eq4.5}
R(v)=  \esssup\limits_{\p \in \pp} E^\p [ R(U^\alpha(v)) \vert \f_v] \quad \text{a.s.}
\end{align}
 From the $\pp$-supermartingale property of $(R(v), v\in \s)$, we clearly have the property: for each $\p \in \pp$, $E^\p [ R(U^\alpha(v)) \vert \f_v]  \leq R(v)$ a.s., since $\U$ is a stopping time greater than or equal to $v$. By taking the essential supremum over $\p \in \pp$, we get $ \esssup\limits_{\p \in \pp} E^\p [ R(U^\alpha(v)) \vert \f_v] $\\$\leq R(v)$ a.s.  To prove the reverse inequality, consider for each $v\in \s$, the random variable $\bar{S}(v)=  \esssup\limits_{\p \in \pp} E^\p [ R(U^\alpha(v)) \vert \f_v]$.\\

\textit{Step 1.1.} We claim that the family $(\bar{S}(v), v\in \s)$ is a $\pp$-supermartingale family i.e., for any $\p \in \pp$ and $\tau, \tau' \in \s$ such that $\tau \geq \tau'$ a.s., we have 
 \begin{align} \label{supprop}
     E^\p[\bar{S}(\tau) \vert \f_{\tau'}] \leq \bar{S}(\tau') \quad \text{a.s.} 
\end{align} 
   Indeed, fix $\tau \in \s$, $Z\in \z$. Let $Z^1, Z^2\in \z$, and set
   \begin{align*}
       A&:= \Big \lbrace E^\q \Big[\frac{Z^1_{\Uu}}{Z^1_\tau} R(\Uu) \Big\vert \f_\tau\Big] \leq E^\q \Big[\frac{Z^2_{\Uu}}{Z^2_\tau} R(\Uu) \Big\vert \f_\tau \Big] \Big \rbrace \in \f_\tau.\\
       \bar{Z}_t&:= Z^1_t \q(A^c \vert \mathcal{F}_t) + Z^2_t \q(A \vert \mathcal{F}_t), \quad 0 \leq t \leq T. 
   \end{align*}
   Since $\mathcal{Z}$ is $L^0$-convex, we have $\bar{Z} \in \mathcal{Z}$. Therefore,
   \begin{align*} 
&\hspace*{0.5 cm} E^\q \Big [\frac{\bar{Z}_{\Uu}}{\bar{Z}_{\tau}} R(\Uu) \Big\vert \f_\tau \Big ] \\
 &= E^\q \Big[ \frac{Z^1_{\Uu}}{Z^1_\tau} R(\Uu) \Big\vert \mathcal{F}_\tau \Big ] \mathbf{1}_{A^c} + E^\q \Big [ \frac{Z^2_{\Uu}}{Z^2_\tau} R(\Uu) \Big\vert \mathcal{F}_\tau \Big] \mathbf{1}_{A}\\
 &= E^\q\Big[ \frac{Z^1_{\Uu}}{Z^1_\tau} R(\Uu) \Big\vert \mathcal{F}_\tau \Big]  \vee E^\q\Big[ \frac{Z^2_{\Uu}}{Z^2_\tau} R(\Uu) \Big\vert \mathcal{F}_\tau \Big].
 \end{align*}
 Hence, the family of random variables $( E^\q \big[\frac{Z_{\Uu}}{Z_\tau} R(\Uu)\vert \f_\tau \big], Z \in \z)$ is closed under pairwise maximization. Similarly as in proof of Proposition \ref{prop5}, without any loss of generality, there exists a sequence $(Z^n)_{n \in \mathbb{N}} \in \z_{\tau,\Uu}$, such that $Z^n_u = Z_u$, $\forall u \in [v,\tau]$ and
   \begin{align*}
\bar{S}(\tau) = \lim\limits_{n \rightarrow \infty} \uparrow E^\q \Big[\frac{Z^n_{\Uu}}{Z^n_\tau} R(\Uu) \Big\vert \f_\tau \Big] \quad \text{a.s.}
\end{align*}
It follows that, for any $\tau' \in \s_{\tau}$ a.s., we have 
\begin{align*}
       E^\p[\bar{S}(\tau) \vert \f_{\tau'}]&=E^\q \Big[ \frac{Z_\tau}{Z_{\tau'}} \bar{S}(\tau) \Big\vert \f_{\tau'} \Big] \\
       &= E^\q \Big[ \frac{Z_\tau}{Z_{\tau'}}  \lim\limits_{n \rightarrow \infty} \uparrow E^\q \Big[\frac{Z^n_{\Uu}}{Z^n_\tau} R(\Uu) \Big\vert \f_\tau \Big] \Big\vert \f_{\tau'} \Big] \\
       &\leq \lim\limits_{n \rightarrow \infty} E^\q \Big[ \frac{Z_\tau}{Z_{\tau'}} \frac{Z^n_{\Uu}}{Z^n_\tau} R(\Uu) \Big\vert \f_{\tau'} \Big] \\
       &\leq \lim\limits_{n \rightarrow \infty} E^\q \Big[ \frac{Z^n_{\Uu}}{Z^n_{\tau'}} R(\Uu) \Big\vert \f_{\tau'} \Big]\\
       &\leq \esssup_{M\in \z} E^\q \Big[ \frac{M_{\Uu}}{M_{\tau'}} R(\Uu) \Big\vert \f_{\tau'} \Big]\\
       &= \esssup_{M\in \z} E^\q \Big[ \frac{M_{U^\alpha(\tau')}}{M_{\tau'}}  E^\q \Big[ 
      \frac{M_{\Uu}}{M_{U^\alpha(\tau')}} R(\Uu) \Big\vert \f_{U^\alpha(\tau')} \Big] \Big\vert \f_{\tau'} \Big].  \numberthis \label{eq46}
\end{align*}
 Since $\tau \geq \tau'$, it is immediate that $\Uu \geq U^\alpha(\tau')$. The $\pp$-supermatingale property of $R$ together with Eq. \eqref{eq46} yields
 \begin{align*}
     E^\p[\bar{S}(\tau) \vert \f_{\tau'}] \leq \esssup_{M\in \z} E^\q \Big[ \frac{M_{U^\alpha(\tau')}}{M_{\tau'}}  R(U^\alpha(\tau')) \Big\vert \f_{\tau'} \Big] = \bar{S}(\tau') \quad \text{a.s.,}
 \end{align*}
 which is our claim in \eqref{supprop}. \\
 
\textit{Step 1.2.} For a fixed $\alpha \in (0,1)$, consider the $\pp$-supermartingale family $(\alpha R(v) + (1- \alpha) \bar{S}(v), v \in \s)$. Let us show that $\alpha R(v) + (1- \alpha) \bar{S}(v) \geq Y(v)$. Fix $\tau \in \s_v$. Set $\bar{A}:= \lbrace \alpha R(v) \leq Y(v) \rbrace \in \f_v$. From \ref{U=v} of Remark \ref{rema}, we have $\U= v$ a.s. on $\bar{A}$. Hence, $\bar{S}(v)=  \esssup\limits_{\p \in \pp} E^\p [ R(U^\alpha(v)) \vert \f_v]=\esssup\limits_{\p \in \pp} E^\p [ R(v) \vert \f_v]= R(v)$ a.s. on $\bar{A}$. Therefore,
\begin{align*}
    \alpha R(v) + (1- \alpha) \bar{S}(v) = R(v) \geq Y(v) \quad \text{a.s. on $\bar{A}$.}
\end{align*}
Moreover, on $\bar{A}^c= \lbrace \alpha R(v) > Y(v) \rbrace$ by the positivity of $\bar{S}$, we get 
\begin{align*}
    \alpha R(v) + (1- \alpha) \bar{S}(v) \geq \alpha R(v) \geq Y(v) \quad \text{a.s. on $\bar{A}^c$,}
\end{align*}
and Step 1.2. is proved. \\
Now since $(R(v), v \in \s)$ is the smallest $\pp$-supermartingale family which dominates $(Y(v), v\in \s)$, we get $ \alpha R(v) + (1- \alpha) \bar{S}(v) \geq R(v)$ a.s. Moreover, since  $R(v) < \infty$ a.s. and $\lambda < 1$, we conclude that $\bar{S}(v) \geq R(v)$ a.s. The proof of Step 1 is complete. \\

\textit{Step 2.} Let us now show that 
\begin{equation}
\alpha R(v) \leq \esssup_{\p \in \pp} E^\p [ Y(\U) \vert
\f_v] \quad \text{a.s.}
\end{equation}
By Proposition \ref{prop4.1} and Step 1., we have
\begin{align*}
    \alpha R(v)= \esssup\limits_{\p \in \pp} E^\p [ \alpha R(U^\alpha(v)) \vert \f_v] \leq \esssup\limits_{\p \in \pp} E^\p [ Y(U^\alpha(v)) \vert \f_v] \quad \text{a.s.}
\end{align*}
Thereupon, $\U$ is $(1-\alpha)$-optimal for $R(v)$, and the proof is complete. \\
\end{proof}

Let $v\in \s$ and $\lambda_1, \lambda_2 \in (0,1)$ such that $\lambda_1 \leq \lambda_2$, then clearly $U^{\lambda_1}(v) \leq U^{\lambda_2}(v)$ a.s. Accordingly, the map $\alpha \rightarrow \U$ is non-decreasing on $(0,1)$, and so we define the stopping time 
\begin{align} \label{U*}
    U^*(v):= \lim\limits_{\lambda \uparrow 1} \U \quad \text{a.s.}
\end{align}
Note that $U^*(v) \geq v$ a.s. Inspiring by classical literature, the stopping time $U^*(v)$ appears to be a good nominee for optimal stopping time for $R(v)$. Under further assumptions on the reward family, we state the main existence result of an optimal stopping time in the following theorem. 
\begin{theorem}(Existence of an optimal stopping time) \label{existence of optimal stopping time}
Assume that the reward family \\ $(Y(v),v \in \s)$  is $\pp$-USCE, and that $R<\infty$. Then for every $v \in \s$, the stopping time $U^*(v)$ (defined by \eqref{U*}) is an optimal stopping time for $R(v)$, that is, \begin{align} \label{opt for v}
    R(v)= \esssup_{ \p \in \pp}  E^\p[Y(U^*(v)) \vert \f_v].
\end{align}
Additionally, $U^*(v)=\mathcal{U}(v):= \essinf \lbrace \tau \in\s_v, R(\tau)=Y(\tau) \quad \text{a.s.} \rbrace$ a.s. 
\end{theorem}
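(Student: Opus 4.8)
The plan is to prove the two assertions in turn, fixing $v\in\s$ throughout, choosing a sequence $\alpha_k\uparrow 1$, and writing $U_k:=U^{\alpha_k}(v)$, so that $U_k\uparrow U^*(v)$ by the monotonicity of $\alpha\mapsto\U$ recorded before \eqref{U*}. For the optimality identity \eqref{opt for v} one inequality is free: since $U^*(v)\in\s_v$, the definition \eqref{eq1*} of $R(v)$ gives $\esssup_{\p\in\pp}E^\p[Y(U^*(v))\vert\f_v]\le R(v)$ a.s. The content is the reverse inequality, and the route is to let $\alpha\uparrow1$ in the $(1-\alpha)$-optimality $\alpha R(v)\le\esssup_{\p\in\pp}E^\p[Y(U_k)\vert\f_v]$ supplied by the preceding theorem. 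Combined with the trivial bound $\esssup_{\p\in\pp}E^\p[Y(U_k)\vert\f_v]\le R(v)$, this already pins the limit, $\esssup_{\p\in\pp}E^\p[Y(U_k)\vert\f_v]\to R(v)$ a.s.; what remains is to transport the limit onto the single time $U^*(v)$, i.e. to prove $\esssup_{\p\in\pp}E^\p[Y(U^*(v))\vert\f_v]\ge R(v)$.

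First I would isolate the clean measure-by-measure fact $R(U^*(v))=Y(U^*(v))$ a.s. Fix $\p\in\pp$; integrating Proposition \ref{prop4.1} gives $\alpha_k E^\p[R(U_k)]\le E^\p[Y(U_k)]$, all terms finite since the $\pp$-supermartingale property yields $E^\p[R(\tau)]\le R(0)=R<\infty$. That same supermartingale property makes $k\mapsto E^\p[R(U_k)]$ nonincreasing with limit $\ge E^\p[R(U^*(v))]$, while $\pp$-left USCE of $Y$ along $U_k\uparrow U^*(v)$ gives $\limsup_k E^\p[Y(U_k)]\le E^\p[Y(U^*(v))]$. Letting $k\to\infty$ (and $\alpha_k\to1$) produces $E^\p[R(U^*(v))]\le E^\p[Y(U^*(v))]$; as $R\ge Y$ and $\p\sim\q$, this forces $R(U^*(v))=Y(U^*(v))$ a.s. Consequently $\esssup_{\p\in\pp}E^\p[Y(U^*(v))\vert\f_v]=\esssup_{\p\in\pp}E^\p[R(U^*(v))\vert\f_v]$, so the reverse inequality reduces to the ``martingale up to $U^*(v)$'' identity $R(v)=\esssup_{\p\in\pp}E^\p[R(U^*(v))\vert\f_v]$.

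This identity is where I expect the main obstacle. The ``$\le$'' part is once more the $\pp$-supermartingale property; for ``$\ge$'' the difficulty is genuine, because the essential supremum over $\p$ does not commute with the limit in $k$ (pointwise semicontinuity only gives $\esssup_\p\limsup_k\le\limsup_k\esssup_\p$, the wrong direction), and no compactness of $\pp$ is available under the mere hypothesis $R<\infty$. I would resolve it exactly as in Step 1 of the preceding proof rather than by a raw interchange: set $W(v):=\esssup_{\p\in\pp}E^\p[R(U^*(v))\vert\f_v]$, show that $(W(v),v\in\s)$ is a $\pp$-supermartingale family by repeating the pairwise-maximization and $L^0$-convex optimizing-sequence computation of Step~1.1 verbatim with $U^*$ in place of $\U$ (using $U^*(\tau)\ge U^*(\tau')$ for $\tau\ge\tau'$), and then deploy the convex-combination device. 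One first records $\{U^*(v)=v\}=\{R(v)=Y(v)\}$ (which uses $R<\infty$, Remark \ref{rema}\ref{U=v} and Proposition \ref{prop4.1}), so that $W(v)=R(v)$ there; the delicate point is to establish the domination $\alpha R(v)+(1-\alpha)W(v)\ge Y(v)$ on the complement $\{R(v)>Y(v)\}$, where one must bound $W$ from below by bringing in $\pp$-left USCE together with the localization of Remark \ref{USCE 1A}. Granting the domination, minimality of $R$ among $\pp$-supermartingale families dominating $Y$ gives $\alpha R+(1-\alpha)W\ge R$, and since $R<\infty$ and $\alpha<1$ one cancels to get $W\ge R$, hence $W=R$. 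It is precisely this verification on $\{R(v)>Y(v)\}$ — which absorbs the $\esssup_\p$/limit interchange into a supermartingale comparison — that I anticipate being the hardest step.

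The identification $U^*(v)=\mathcal U(v)$ is then immediate from $R(U^*(v))=Y(U^*(v))$. On one hand $U^*(v)$ belongs to $\{\tau\in\s_v:R(\tau)=Y(\tau)\text{ a.s.}\}$, whence $\mathcal U(v)\le U^*(v)$ a.s. On the other hand, any $\tau\in\s_v$ with $R(\tau)=Y(\tau)$ a.s. satisfies $\alpha R(\tau)\le Y(\tau)$, so $\tau\in\lo$ and $\U\le\tau$ a.s. for every $\alpha\in(0,1)$; letting $\alpha\uparrow1$ gives $U^*(v)\le\tau$, and taking the essential infimum over all such $\tau$ yields $U^*(v)\le\mathcal U(v)$ a.s. Combining the two inequalities gives $U^*(v)=\mathcal U(v)$ a.s., completing the proof.
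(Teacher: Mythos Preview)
Your argument for $R(U^*(v))=Y(U^*(v))$ and the identification $U^*(v)=\mathcal U(v)$ are correct and essentially match the paper. The gap is exactly where you flag it: the domination $\alpha R(v)+(1-\alpha)W(v)\ge Y(v)$ on $\{R(v)>Y(v)\}$ does \emph{not} follow from the hint you give, and the analogy with Step~1.2 of the preceding theorem breaks down. In Step~1.2 the partition is $\{\alpha R(v)\le Y(v)\}$ versus its complement; on the complement one has $\alpha R(v)>Y(v)$ \emph{for that very $\alpha$}, so positivity of $\bar S$ suffices. In your setting the partition is $\{R(v)=Y(v)\}$ versus $\{R(v)>Y(v)\}$; on the latter you only know $R(v)>Y(v)$, which for a fixed $\alpha<1$ does \emph{not} give $\alpha R(v)\ge Y(v)$, and you have no lower bound on $W(v)$ there. $\pp$-left USCE (even in the localized form of Remark~\ref{USCE 1A}) only produces, measure by measure, $E^\p[Y(U^*(v))\mathbf 1_A]\ge\limsup_kE^\p[Y(U_k)\mathbf 1_A]$; combined with $Y(U_k)\ge\alpha_kR(U_k)$ and the $\pp$-supermartingale property this yields an \emph{upper} bound $W\le R$ rather than the lower bound you need, so the Snell-minimality comparison cannot be closed along this route.

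The paper avoids the Snell comparison for $W$ altogether and instead argues by contradiction, staying at the finite-$\alpha$ level where \eqref{Eq4.5} is already available. For each fixed $\p\in\pp$ and each $A\in\f_v$, Proposition~\ref{prop4.1} and Remark~\ref{USCE 1A} give
\[
E^\p\Big[\lim_{\alpha\uparrow1}E^\p[R(\U)\mid\f_v]\,\mathbf 1_A\Big]\le\limsup_{\alpha\uparrow1}E^\p\Big[\tfrac{1}{\alpha}Y(\U)\mathbf 1_A\Big]\le E^\p[Y(U^*(v))\mathbf 1_A],
\]
hence $\lim_{\alpha\uparrow1}E^\p[R(\U)\mid\f_v]\le E^\p[Y(U^*(v))\mid\f_v]$ a.s. If optimality failed, one would find $\epsilon>0$ and a set $C\in\f_v$ of positive measure with $E^\p[Y(U^*(v))\mid\f_v]\le R(v)-\epsilon\mathbf 1_C$ for every $\p\in\pp$; the monotone limit in $\alpha$ then produces some $\bar\alpha$ with $E^\p[R(U^{\bar\alpha}(v))\mid\f_v]\le R(v)-\epsilon\mathbf 1_C$, and taking the essential supremum over $\p$ contradicts \eqref{Eq4.5}. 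The point is that the interchange of limit and $\esssup_\p$ that blocks your approach is sidestepped: the USCE passage to the limit is done measure by measure, and only \emph{after} that does one bring in the $\esssup_\p$ via the already-established identity \eqref{Eq4.5} at a fixed $\bar\alpha$.
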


\begin{proof} Let $v\in \s$. By the characterization of  $(R(v), v\in \s)$ as the smallest $\pp$-supermartingale family which dominates $(Y(v), v\in \s)$, for every $\p \in \pp$, we have
\begin{align*}
    E^\p[Y(U^*(v)) \vert \f_v] \leq E^\p[R(U^*(v)) \vert \f_v] \leq R(v) \quad \text{a.s.,}
\end{align*}
which yields that $\esssup\limits_{\p \in \pp} E^\p[Y(U^*(v)) \vert \f_v] \leq R(v)$ a.s. Let us show the other inequality. Suppose by contradiction that for all $\epsilon>0$, $\p \in \pp$,
\begin{align*}
    \p\big( \lbrace R(v) - \epsilon \geq E^\p[Y(U^*(v))\vert \f_v] \rbrace \big) >0. 
\end{align*}
Set $C:=\lbrace R(v) - \epsilon \geq E^\p[Y(U^*(v))\vert \f_v] \rbrace$. For all $A\in \f_v$, we have
\begin{align*}
    E^\p[Y(U^*(v)) \mathbf{1}_A]&= E^\p[E^\p[Y(U^*(v))\vert \f_v] \mathbf{1}_A]\\
    &= E^\p \Big[E^\p[Y(U^*(v))\vert \f_v] \mathbf{1}_{A \cap C} \Big] + E^\p[Y(U^*(v)) \mathbf{1}_{A \cap C^c}]\\
    &\leq E^\p [(R(v) - \epsilon) \mathbf{1}_{A \cap C} ] + E^\p[R(U^*(v)) \mathbf{1}_{A \cap C^c}]  \quad \text{a.s.} \\
    &\leq E^\p [(R(v) - \epsilon) \mathbf{1}_{A \cap C} ] + E^\p[R(v) \mathbf{1}_{A \cap C^c}]  \quad \text{a.s.}\\
    &\leq E^\p[(R(v)- \epsilon \mathbf{1}_C) \mathbf{1}_A] \quad \text{a.s.}
\end{align*}
Therefore, $E^\p[Y(U^*(v)) \vert  \f_v] \leq R(v)- \epsilon \mathbf{1}_C$ a.s. By the integrability condition $R<\infty$, Proposition \ref{alphaR<Y} and Remark \ref{USCE 1A}, we have for each $A\in \f_v$
\begin{align*}
    E^\p[\lim\limits_{\alpha \uparrow 1} E^\p[R(\U) \vert \f_v] \mathbf{1}_A ] &\leq  E^\p \Big[\lim\limits_{\alpha \uparrow 1} E^\p \Big[ \frac{1}{\alpha} Y(\U) \mathbf{1}_A  \vert \f_v \Big] \Big]\\
    &\leq  \lim\limits_{\alpha \uparrow 1} E^\p \Big[ \Big (\frac{1}{\alpha} Y(\U) \mathbf{1}_A \Big) \Big] \\
    & \leq E^\p [Y(U^*(v)) \mathbf{1}_A].
\end{align*}
Hence, 
\begin{align*}
    \lim\limits_{\alpha \uparrow 1} E^\p[R(\U) \vert \f_v] \leq  E^\p [Y(U^*(v)) \vert \f_v] \leq R(v) - \epsilon \mathbf{1}_C, \quad \forall \p \in \pp. 
\end{align*}
Then, there exists $\bar{\alpha} \in (0,1)$ such that, for each $\p \in \pp$,
\begin{align*}
    E^\p[R(U^{\bar{\alpha}} (v)) \vert \f_v] \leq  E^\p [Y(U^*(v)) \vert \f_v] \leq R(v) - \epsilon \mathbf{1}_C.
\end{align*}
Taking the essential supremum over $\p \in \pp$, and using Eq. \eqref{Eq4.5}, we get 
\begin{align*}
    R(v) \leq R(v) - \epsilon \mathbf{1}_C,
\end{align*}
which gives the desired contradiction, and hence Eq. \eqref{opt for v} follows. Now, what is left is to show that $\mathcal{U}(v)= U^*(v)$ a.s. Since the map $\alpha \rightarrow \U$ is non-decreasing on $(0,1)$, for each $\alpha \in (0,1)$, we have $\U \leq U^*(v)$ a.s. By Proposition \ref{prop4.1} and the $\pp$-supermartingale property of $(R(v), v \in \s)$, we obtain
\begin{align*}
    \alpha E^\p[R(U^*(v))] \leq \alpha E^\p[R(\U)] \leq E^\p[Y(\U)] \quad \text{a.s.,} \quad \forall \alpha \in (0,1), \quad \forall \p \in \pp.
\end{align*}
Passing to the limit as $\alpha \uparrow 1$, and using the fact that $(Y(v), v\in \s)$ is $\pp$-USCE, we get
\begin{align*}
      E^\p[R(U^*(v))] \leq \liminf\limits_{\alpha \uparrow 1} E^\p[R(\U)] \leq \lim\limits_{\alpha \uparrow 1} E^\p[Y(\U)] \leq E^\p[Y(U^*(v))] \quad \text{a.s.,} 
\end{align*}
for every $\p \in \pp$. This with the fact that the family $(R(v), v \in \s)$ dominates $Y$, leads to  $R(U^*(v))=Y(U^*(v))$ a.s. It follows that, by definition of $\mathcal{U}(v)$, we have $U^*(v) \geq \mathcal{U}(v)$. Let us show the other inequality. Observe that for $\alpha=1$, we have $U^1(v)= \mathcal{U}(v)$ a.s. Then, for all $\alpha \leq 1$, $ \U \leq U^1(v)= \mathcal{U}(v)$ a.s. Taking the limit $\alpha \uparrow 1$, we obtain $U^*(v) \leq \mathcal{U}(v)$ a.s., which makes the proof ended. 
\end{proof}

\section{Existence of Optimal models} \label{optimmod}

We now consider the question of under which conditions on the family $\mathcal{P}$ there exists an optimal probability model for our problem. To this end, under suitable conditions on the family $\mathcal{P}$, we establish a “universal” Doob-Meyer-Mertens's decomposition for our Snell envelope family $\mathcal{R}=(R(v), v\in\s)$ in the sense that it holds simultaneously for all $\p \in \pp$. Precisely, we shall decompose $\mathcal{R}$ as the difference between a $\pp$-martingale with RCLL paths, and an optional RCLL increasing process.\\ 

Recall some definitions and results. Here and subsequently,  
\begin{enumerate}
    \item[•] $\mathbf{H}^2$ (resp. $\mathbf{H}^{loc}$) denotes the set of all zero-mean, square-integrable $\q$-martingales (resp. local $\q$-martingales). 
    We define semi-norms on $\mathbf{H}^2$ by the formula: $\normx{M}_t:= \sqrt{E^{\q}[M_t^2]}$ for $M\in \mathbf{H}^2$. The space $\lbrace \mathbf{H}^2, \normx{.}_t \rbrace$ is a complete separable space (see \cite[Proposition 4.1.]{kunitawatanabe}).
    \item[•] $\mathbf{X}^2_{lr}$ denotes the collection of all optional processes $X$ such that $X_\tau$ lies in $L^2(\f_\tau,\q)$ for all $\tau\in \s$, and such that $X$ admits right and left-limits. 
    \item[•] For every $\q$-local martingale $H$, $\mathcal{E}(H)$ denotes the Doléans-Dade exponential of $H$, that is, the solution of the stochastic differential equation: $d H_t=H_{t-} d H_t$, with $H_0=1$.
\end{enumerate}

\begin{definition} \label{subspace}
A subset $\mathcal{H}$ of $\mathbf{H}^2$ is called a subspace of $\mathbf{H}^2$ if it satisfies the following three conditions; $(i)$ \quad  $X,Y \in \mathcal{H}$ then $X+Y \in \mathcal{H}$; \quad $(ii)$ \quad If $X\in \mathcal{H}$ and $\psi$ is predictable, then $\int \psi dX \in \mathcal{H}$; \quad  $(iii)$ \quad $\mathcal{H}$ is closed in $\lbrace \mathbf{H}^2, \normx{.}_t \rbrace$.
\end{definition}

We have the following fundamental “Kunita-Watanabe decomposition” (see \cite[Proposition 4.2.]{kunitawatanabe}. 

\begin{proposition}
    If $\mathcal{H}$ is a subspace of $\mathbf{H}^2$, then any element $W$ of $\mathbf{H}^2$ can be decomposed uniquely as $W=W'+W''$, where $W' \in \mathcal{H}$, and $W'' \in \mathcal{H}^\perp$.
\end{proposition}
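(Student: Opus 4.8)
The plan is to recognize this statement as nothing more than the orthogonal projection theorem in a Hilbert space, so the whole argument reduces to (a) identifying the correct Hilbert structure on $\mathbf{H}^2$ and (b) checking that $\mathcal{H}$ is a closed linear subspace. First I would equip $\mathbf{H}^2$ with the inner product $\langle M, N\rangle := E^\q[M_T N_T]$, whose associated norm is exactly the terminal semi-norm $\normx{\cdot}_T$. Because every $M \in \mathbf{H}^2$ is a square-integrable $\q$-martingale on the finite horizon, it satisfies $M_t = E^\q[M_T \mid \f_t]$, so $M$ is determined by its terminal value $M_T$; the map $M \mapsto M_T$ is then an isometry of $\mathbf{H}^2$ onto the set $\lbrace \xi \in L^2(\f_T,\q) : E^\q[\xi]=0 \rbrace$. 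This set is the kernel of the continuous linear functional $\xi \mapsto E^\q[\xi]$ on $L^2(\f_T,\q)$, hence a closed linear subspace, and therefore itself a (separable) Hilbert space. This is consistent with the completeness and separability of $\lbrace \mathbf{H}^2, \normx{\cdot}_t \rbrace$ recalled above.

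Next I would verify that $\mathcal{H}$ is a closed linear subspace of this Hilbert space, using the three conditions of Definition \ref{subspace}. Closure under addition is condition $(i)$. Closure under scalar multiplication follows from condition $(ii)$ applied to the constant (hence predictable) integrand $\psi \equiv c$, which gives $\int \psi \, dX = cX \in \mathcal{H}$ for every $X \in \mathcal{H}$ and $c \in \rr$. Topological closedness in $\lbrace \mathbf{H}^2, \normx{\cdot}_t \rbrace$ is condition $(iii)$. Consequently $\mathcal{H}$ is a closed linear subspace, and $\mathcal{H}^\perp$ is its orthogonal complement with respect to $\langle \cdot, \cdot\rangle$.

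With these two facts in hand, the conclusion is precisely the \emph{orthogonal projection theorem}: every $W \in \mathbf{H}^2$ admits the decomposition $W = W' + W''$, where $W'$ is the orthogonal projection of $W$ onto the closed subspace $\mathcal{H}$ and $W'' := W - W' \in \mathcal{H}^\perp$. Uniqueness is the standard Hilbert-space argument: if $W = W'_1 + W''_1 = W'_2 + W''_2$ are two such decompositions, then $W'_1 - W'_2 = W''_2 - W''_1$ lies in both $\mathcal{H}$ and $\mathcal{H}^\perp$, hence in $\mathcal{H} \cap \mathcal{H}^\perp = \lbrace 0 \rbrace$, forcing $W'_1 = W'_2$ and $W''_1 = W''_2$.

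The only genuinely non-routine point I anticipate is the passage from the \emph{family} of semi-norms $\normx{\cdot}_t$ to a single inner product inducing a bona fide Hilbert space; everything hinges on the terminal-value isometry at $t = T$, which legitimizes treating $\normx{\cdot}_T$ as a norm on $\mathbf{H}^2$ (elements agreeing $\q$-a.s. being identified). Once that identification is secured, existence and uniqueness are immediate from the classical projection theorem, and no further stochastic-analytic input is required.
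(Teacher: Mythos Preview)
Your approach is correct and is essentially the argument of Kunita--Watanabe \cite{kunitawatanabe}, which is exactly what the paper invokes: the paper does not give its own proof of this proposition but simply cites that reference.

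One point deserves to be made explicit. You take $\mathcal{H}^\perp$ to be the orthogonal complement for the Hilbert inner product $(M,N)\mapsto E^\q[M_T N_T]$, whereas the paper (and Kunita--Watanabe) use $\mathcal{H}^\perp$ in the \emph{strong} sense: $N\in\mathcal{H}^\perp$ means the predictable-bracket process $\langle M,N\rangle$ vanishes identically for every $M\in\mathcal{H}$ (equivalently $MN$ is a $\q$-martingale). This strong orthogonality is what is actually used downstream in the proof of Theorem~\ref{universal decompo}. The two notions coincide here precisely because of condition~$(ii)$ in Definition~\ref{subspace}: if $E^\q[M_T N_T]=0$ for all $M\in\mathcal{H}$, then for every bounded predictable $\psi$ the integral $\int\psi\,dM$ also lies in $\mathcal{H}$, so
\[
0=E^\q\Big[\Big(\int_0^T\psi_s\,dM_s\Big)N_T\Big]=E^\q\Big[\int_0^T\psi_s\,d\langle M,N\rangle_s\Big],
\]
and since $\psi$ is arbitrary this forces $\langle M,N\rangle\equiv 0$. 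Once you append this remark, the Hilbert projection theorem does all the work, exactly as you outline; your handling of conditions $(i)$--$(iii)$ to obtain a closed linear subspace is correct.
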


Note that if two probability measures $\p$ and $\q$ are equivalent, then there is a $\q$-local martingale $H$, such that $\left.\frac{d \p}{d \q}\right|_{\mathcal{F}_t}=\mathcal{E}(H)_t$ (see a.g., \cite[Proposition 5.8.]{legall}). We denote by $\mathcal{H}$ the set
\begin{equation}
    \mathcal{H} \triangleq \lbrace H  \in \mathbf{H}^{loc} / \,\, \mathcal{E}(H)= Z, \quad \text{for some } Z \in \mathcal{Z} \rbrace. 
\end{equation}
We can now state the following universal Doob-Meyer-Mertens's decomposition that holds simultaneously for all $\p \in \pp$. The proof is adapted from the Kunita-Watanabe work on square-integrable martingales (see
\cite{kunitawatanabe,Zam}).

\begin{theorem}(Optional decomposition) \label{universal decompo}
Suppose that $\mathcal{H}$ is a subspace of $\mathbf{H}^2$ in the sense of Definition \ref{subspace}. Let $S$ be a  $\pp$-supermartingale family $S$, such that $(S(\tau), \tau \in \s)$ is uniformaly integrable w.r.t. the reference probability $\q$, then there exists $X\in \mathbf{X}^2_{lr}$ such that $S(\tau) = X_\tau$ a.s. for all $\tau\in \s$. If in addition, $\esssup\limits_{\tau \in \s} S(\tau) \in  L^2(\f,\q)$, then there exists $C$ an optional non-decreasing process with RCLL paths and $C_0=0$, and a RCLL $\pp$-martingale $M\in \mathbf{H}^2$, such that a.s.
\begin{equation}
    S(\tau)=X_\tau= X_0+ M_\tau - C_\tau, \quad \text{for all $\tau \in \s$}. 
\end{equation}
\end{theorem}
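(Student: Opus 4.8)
The plan is to reduce the family statement to a process statement by aggregation, run the ordinary Doob--Meyer--Mertens decomposition under the single reference measure $\q$, and then use the \emph{full} $\pp$-supermartingale property — a supermartingale under \emph{every} model, not just $\q$ — to force the martingale part into $\mathcal{H}^\perp$, which is exactly the Kunita--Watanabe characterization of a $\pp$-martingale. For the first assertion, note that since $\q\in\pp$ the family $S$ is in particular a $\q$-supermartingale family, and it is uniformly integrable by hypothesis; by the classical aggregation theorem for supermartingale families indexed by stopping times (the general theory underlying \cite{kobQuen,Zam}) there is an optional strong $\q$-supermartingale $X$ with làdlàg trajectories such that $X_\tau=S(\tau)$ a.s. for every $\tau\in\s$, and the integrability of the family yields $X_\tau\in L^2(\f_\tau,\q)$ for all $\tau$, i.e. $X\in\mathbf{X}^2_{lr}$.

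Assume now $Y^*:=\esssup_{\tau\in\s}S(\tau)\in L^2(\f,\q)$. Then $X$ is dominated by the $L^2$-variable $Y^*$, hence is of class (D) and $L^2$-bounded, so the Doob--Meyer--Mertens decomposition under the reference measure $\q$ applies: $X=X_0+N-A$ with $N\in\mathbf{H}^2$ a $\q$-martingale ($N_0=0$) and $A$ nondecreasing, $A_0=0$ (predictable in the RCLL case, optional làdlàg in the general Mertens case). The goal is to show that $M:=N$ is already a $\pp$-martingale and that $C:=A$ remains nondecreasing, which then gives $S(\tau)=X_\tau=X_0+M_\tau-C_\tau$.

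Here is the key step. Fix $H\in\mathcal{H}$ and let $Z=\mathcal{E}(H)\in\z$ be the density of the corresponding $\p\in\pp$ (see \cite{legall}). By Girsanov's theorem $\tilde N:=N-\langle N,H\rangle$ is a $\p$-local martingale, so $X=X_0+\tilde N-\bigl(A-\langle N,H\rangle\bigr)$. Since $X$ is a $\p$-supermartingale, uniqueness of its canonical $\p$-decomposition identifies the predictable finite-variation part $A-\langle N,H\rangle$ with the Doob--Meyer increasing part, so $A-\langle N,H\rangle$ is nondecreasing. Now use that $\mathcal{H}$ is a linear subspace: $\lambda H\in\mathcal{H}$ for every $\lambda\in\rr$, and by bilinearity $A-\lambda\langle N,H\rangle$ is nondecreasing for all $\lambda\in\rr$; letting $\lambda\to\pm\infty$ forces the predictable bracket $\langle N,H\rangle\equiv 0$. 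As $H$ was arbitrary, $N\in\mathcal{H}^\perp$. Consequently, for any $Z=\mathcal{E}(H)\in\z$ one has $\langle N,Z\rangle=\int Z_-\,d\langle N,H\rangle=0$, so $NZ$ is a $\q$-local martingale; together with the $L^2$-bounds this makes $M=N$ a true $\p$-martingale for every $\p\in\pp$, i.e. a $\pp$-martingale with RCLL paths, and $C=A=X_0+M-X$ is nondecreasing, optional, RCLL with $C_0=0$.

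The main obstacle is precisely this forcing argument: one must control the finite-variation part $A-\langle N,H\rangle$ \emph{simultaneously} for the whole one-parameter family $\{\mathcal{E}(\lambda H)\}_{\lambda\in\rr}$ and squeeze out $\langle N,H\rangle=0$ from its monotonicity. This is where both the $\pp$-supermartingale property (valid for every model, not merely for $\q$) and the linearity and closedness of $\mathcal{H}$ are indispensable, and where the Kunita--Watanabe framework (see \cite{kunitawatanabe}) supplies the equivalence between ``$N\in\mathcal{H}^\perp$'' and ``$M$ is a $\pp$-martingale.'' The remaining points — measurability/integrability needed to pass from local to true $\p$-martingales, and the RCLL regularity of $C$ under the stated hypotheses — are routine refinements.
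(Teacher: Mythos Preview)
Your argument is correct and in fact cleaner than the paper's. Both proofs start the same way: aggregate to a process $X$, take the $\q$--Doob--Meyer decomposition $X=X_0+N-A^\q$, and use Girsanov plus the $\p$-supermartingale property to deduce that $A^\q-\langle H,N\rangle$ is non-decreasing for every $H\in\mathcal H$. From here the paper first performs the Kunita--Watanabe split $N=K+M$ with $K\in\mathcal H$, $M\in\mathcal H^\perp$, sets $C:=A^\q-K$, and then carries out a fairly delicate analysis---a Lebesgue decomposition of $dA^\q$ against $d\langle K^c\rangle$ together with carefully chosen integrands in $\mathcal H$---to show first that $K^c=0$ and then that $K$ has only nonpositive jumps, concluding that $C$ is increasing. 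You instead exploit the subspace hypothesis more directly: since $\lambda H\in\mathcal H$ for every real $\lambda$ (property~(ii) with constant integrand), the process $A^\q-\lambda\langle H,N\rangle$ is non-decreasing for all $\lambda$, which forces $\langle H,N\rangle\equiv 0$; hence $N\in\mathcal H^\perp$ already, so $K=0$, $M=N$, and $C=A^\q$ is trivially increasing (and even predictable). What your shortcut buys is the elimination of the entire jump/continuous analysis of $K$; what the paper's longer route buys is a template that survives in the classical optional-decomposition setting (Kramkov, F\"ollmer--Kabanov) where $\mathcal H$ is only a cone or convex set rather than a genuine stable subspace, so that scaling by $\lambda$ is unavailable and $K$ can be genuinely nonzero. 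Under the paper's stated hypothesis, however, your route is the sharper one.
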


\begin{proof}
The existence of the process $X\in \mathbf{X}^2_{lr}$ such that $S(\tau) = X_\tau$ a.s. for all $\tau\in \s$ follows from \cite[Theorem 3.1]{g-sup systems decomposition}.
Using Theorem 3.1 in \cite{g-sup systems decomposition}, we deduce the existence of an increasing predictable process $A^\q$ such that $A^\q_0=0$ and $A^\q_T \in L^2(\f, \q)$, and of a $\q$-martingale $\tilde{M}^\q \in \mathbf{H}^2$ such that a.s.
\begin{equation*}
    S(\tau)=X_\tau= X_0+ \tilde{M}^\q_\tau - A^\q_\tau, \quad \forall \tau \in \s.
\end{equation*}
Since $\mathcal{H}$ is a subspace of $\mathbf{H}^2$, the martingale $\tilde{M}^\q$ admits the Kunita-Watanabe decomposition: $M^\q= K+ M$, where $K \in \mathcal{H}$ and $M \in \mathcal{H}^\perp$, i.e. $\langle H, M \rangle=0, \forall H \in \mathcal{H}$. By \cite[Remark 2.16]{Zam}, the set $\z$ of \eqref{Z} is also closed in the set of all square-intergable $\q$-martingales $Z$ with mean 1, under the semi-norms$\normx{.}_t$ defined by: $\normx{Z}_t:= E^{\q}[(Z_t-1)^2]^{\frac{1}{2}}$. Consequently, $ \langle \mathcal{E}(H),M \rangle=0$ for all $H \in \mathcal{H}$, and for each $\p \in \pp$, $Z^\p M$ is a $\q$-martingale. This implies that $M$ is a $\pp$-martingale, which we identify with the $\pp$-martingale in our decomposition. Therefore, we have a.s.
\begin{equation*}
    S(\tau)=X_\tau= X_0+ M_\tau -( A^\q_\tau - K_\tau), \quad \forall \tau \in \s.
\end{equation*}
The proof is completed by showing that the optional process $C:= A^\q -K$ is non-decreasing. Notice that the square-integrable martingale $K \in \mathcal{H}$ admits a decomposition of the form 
\begin{equation*}
    K= K^c + K^d,
\end{equation*}
in which $K^c$ its continuous martingale part, and $K^d$ its purely discontinuous martingale
part, such that $\langle K^c, K^d \rangle = 0$ (see e.g., \cite{Probabilites et Potentiel2}). If we prove that $K$ is a purely discontinuous martingale that has only negative jumps, the assertion follows. 
Indeed, since $S$ is a $\pp$-supermartingale family, $X$ is a $\pp$-supermartingale; thus due to the Girsanov Theorem, the process $\langle H, K+M \rangle - A^\q $ is non-increasing, for all $H \in \mathcal{H}$. It follows that 
\begin{equation} \label{increasing}
    \text{$A^\q - \langle H, K \rangle$ is an increasing process, $\forall H \in \mathcal{H}$.}
\end{equation}  
Now since $K \in \mathbf{H}^2$, the process $\langle K^c \rangle$ is integrable, and by the Lebesgue decomposition theorem, the measure $d A^\q_t$ admits a decomposition of the form 
\begin{equation} \label{dA_t}
    d A^\q_t=\psi_t d\left\langle K^c\right\rangle_t+d D_t,
\end{equation}
in which $\psi$ is a positive predictable process in $L^1([0, \infty) \times$ $\Omega, d\left\langle K^c\right\rangle d Q)$, and $D$ is an integrable predictable increasing process such that, $\q$-almost surely, $d D_t \perp d\left\langle K^c\right\rangle_t$. Fix $m\in \mathbb{Z}$. We have the following version of \eqref{dA_t}
\begin{equation} \label{dAp_t}
    d A^\q_t=\psi_t \mathbf{1}_{\lbrace \psi_t \leq m \rbrace } d\left\langle K^c\right\rangle_t + d  D^m_t,
\end{equation}
where, $\q$-almost surely, the measure $d D^m_t $ is singular w.r.t. $ \mathbf{1}_{\lbrace \psi_t \leq m \rbrace } d\left\langle K^c\right\rangle_t$. Thus, on ${\lbrace \psi \leq m \rbrace }$ we have 
\begin{equation} \label{AQ=}
    A^\q_t= \int_0^t \psi_s \mathbf{1}_{\lbrace \psi_s \leq m \rbrace } d\left\langle K^c\right\rangle_s.
\end{equation}
Since $\mathcal{H}$ is a subspace of $\mathbf{H}^2$, we have $ (\int_0^t  (1+ \psi_s) \mathbf{1}_{\lbrace \psi_s \leq m \rbrace } d K^c_s )_{t\in[0,T]} \in \mathcal{H}$. Therefore, on the event ${\lbrace \psi \leq m \rbrace }$, \eqref{increasing} shows that the process 
\begin{equation*}
    A^\q - \left\langle \int  (1+ \psi_s) \mathbf{1}_{\lbrace \psi_s \leq m \rbrace } d K^c_s, K \right\rangle =  A^\q - \int  (1+ \psi_s) \mathbf{1}_{\lbrace \psi_s \leq m \rbrace } d \left\langle K^c \right\rangle_s \,\,\, \text{is increasing.}
\end{equation*}
By \eqref{AQ=}, we conclude that $\q$-almost surely, the process 
\begin{equation*}
     \left\langle K^c \right\rangle_t = \int_0^t  (1+ \psi_s) \mathbf{1}_{\lbrace \psi_s \leq m \rbrace } d \left\langle K^c \right\rangle_s - \int_0^t \psi_s \mathbf{1}_{\lbrace \psi_s \leq m \rbrace } d\left\langle K^c\right\rangle_s
\end{equation*}
is non-increasing on $\lbrace \psi \leq m \rbrace$, $\forall m \in\mathbb{Z}$. Consequently, $\left\langle K^c \right\rangle_\infty = 0$, and so $K^c =0$ and $K=K^d$. We now proceed to show that $K$ has only negative jumps. The square-integrable martingale $K$ can be decomposed further with respect to the sign of its jumps:  $K= K^+ + K^-$, where $K^+$ (resp. $K^-$) is the compensated integral of $\mathbf{1}_{\lbrace \Delta K>0 \rbrace}$ (resp. $\mathbf{1}_{\lbrace \Delta K<0 \rbrace}$) with respect to $K$. The processes $K^+$ and $K^-$ are both square-integrable martingales (see \cite[Ch. VIII, p. 357]{Probabilites et Potentiel2}). We proceed analogously to the proof of $K^c=0$, and we show that $K^+=0$. We conclude that $K$ is a purely discontinuous martingale with negative jumps, hence the process $C_t= A^\q_t -K_t$ is non-decreasing, which is the desired conclusion.\\
\end{proof}

The following proposition shows that our Snell envelope family $\mathcal{R}=(R(v), v\in\s)$ satisfies the integrability condition of Theorem \ref{universal decompo}; thus, it admits the “universal” Doob-Meyer-Mertens's decomposition. 

\begin{proposition}
    \label{universal decompo for R propo}
If $\mathcal{H}$ is a subspace of $\mathbf{H}^2$ in the sense of Definition \ref{subspace}, and if
\begin{equation} \label{integ cond for Y}
    \sup\limits_{\p \in \pp} E^\p[(\esssup\limits_{\tau \in \s} Y(\tau))^2] < \infty,
\end{equation}
then the Snell envelope family $\mathcal{R}=(R(v), v\in\s)$ associated with $(Y(\tau), \tau \in \s)$ admits a “universal” Doob-Meyer-Mertens's decomposition; i.e.,
there exists $X\in \mathbf{X}^2_{lr}$ such that $R(\tau) = X_\tau$ a.s. Moreover, there exists $C$ an optional non-decreasing process with RCLL paths and $C_0=0$, and a RCLL $\pp$-martingale $M\in \mathbf{H}^2$, such that a.s.
\begin{equation} \label{decomp of R}
    R(\tau)=X_\tau= X_0+ M_\tau - C_\tau, \quad \text{for all $\tau \in \s$}. 
\end{equation}
\end{proposition}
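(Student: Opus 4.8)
The plan is to verify that the Snell envelope family $\mathcal{R}=(R(v),v\in\s)$ satisfies the two hypotheses of Theorem \ref{universal decompo} and then invoke that theorem directly. The two hypotheses are: first, that $\mathcal{R}$ is a $\pp$-supermartingale family whose sample $(R(\tau),\tau\in\s)$ is uniformly integrable with respect to $\q$; and second, the stronger square-integrability condition $\esssup_{\tau\in\s}R(\tau)\in L^2(\f,\q)$. The $\pp$-supermartingale property of $\mathcal{R}$ is already established earlier in the paper, so the entire burden of the proof reduces to deriving the integrability statements from the standing assumption \eqref{integ cond for Y}.

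The key estimate is a pointwise domination of $R(\tau)$ by the conditional best-case value of $Y^*:=\esssup_{\sigma\in\s}Y(\sigma)$. First I would fix $\tau\in\s$ and bound, for every $\p\in\pp$ and every $\sigma\in\s_\tau$, the conditional expectation $E^\p[Y(\sigma)\mid\f_\tau]\leq E^\p[Y^*\mid\f_\tau]$, since $Y(\sigma)\leq Y^*$ a.s. Taking the essential supremum over $\sigma\in\s_\tau$ and then over $\p\in\pp$ in the definition \eqref{eq1*} gives
\begin{equation*}
R(\tau)\leq\esssup_{\p\in\pp}E^\p[Y^*\mid\f_\tau]=\esssup_{Z\in\z}\frac{1}{Z_\tau}E^\q[Z_\tau Y^*\mid\f_\tau]\quad\text{a.s.}
\end{equation*}
Writing this in terms of the density processes via Bayes' rule, and using that $\z$ is $L^0$-convex so that the family $\{E^\p[Y^*\mid\f_\tau]:\p\in\pp\}$ is closed under pairwise maximization, I would extract an optimizing sequence $(\p_n)$ with $E^{\p_n}[Y^*\mid\f_\tau]\uparrow R(\tau)$ and apply monotone convergence together with the defining bound to control moments. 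The goal is to show $\sup_{\p\in\pp}E^\p[\,(R(\tau))^2\,]\leq\sup_{\p\in\pp}E^\p[(Y^*)^2]<\infty$ uniformly in $\tau$, which yields both the $\q$-uniform integrability of $(R(\tau),\tau\in\s)$ and, after establishing the analogous bound for the essential supremum over all stopping times, the condition $\esssup_{\tau\in\s}R(\tau)\in L^2(\f,\q)$.

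For the final $L^2$-bound on $\esssup_{\tau\in\s}R(\tau)$, I would argue that since $R(\tau)\leq\esssup_{\p\in\pp}E^\p[Y^*\mid\f_\tau]$ and $Y^*$ is itself a fixed random variable dominating all the $R(\tau)$, one in fact has $R(\tau)\leq\esssup_{\p}E^\p[Y^*\mid\f_\tau]$ for each $\tau$; taking the essential supremum over $\tau$ and controlling the resulting object by a maximal-type inequality (e.g.\ Doob's $L^2$ inequality applied to the $\q$-martingales $E^\q[Z Y^*\mid\f_\cdot]$ after normalizing by the density, using that $\z$ is bounded in the appropriate $L^2$-sense) reduces everything to $\sup_{\p}E^\p[(Y^*)^2]<\infty$. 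Once both hypotheses are verified, Theorem \ref{universal decompo} applies verbatim and produces the optional process $X\in\mathbf{X}^2_{lr}$ with $R(\tau)=X_\tau$ and the decomposition \eqref{decomp of R}.

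The main obstacle I anticipate is making the passage from \eqref{integ cond for Y}, which is an $L^2$-bound on $Y^*$ uniform over $\pp$, to a $\q$-side $L^2$-bound on $\esssup_\tau R(\tau)$ fully rigorous: the essential supremum over $\tau$ interacts with the essential supremum over $\p$ (equivalently over $Z\in\z$), and controlling $\esssup_\tau\,\esssup_Z\,Z_\tau^{-1}E^\q[Z_\tau Y^*\mid\f_\tau]$ in $L^2(\q)$ requires care, since the random normalizing factors $Z_\tau^{-1}$ are not uniformly bounded. The clean way around this is to avoid the $\q$-side rewriting and instead bound directly under each $\p$: since $E^\p[(Y^*)^2]$ is uniformly bounded and $R(\tau)\le E^\p[Y^*\mid\f_\tau]$ fails to hold for a single $\p$, I would instead work with the $\q$-martingale $N_t:=E^\q[(Y^*)^2\mid\f_t]$ and exploit that each candidate value is dominated by a $\q$-conditional expectation, so that $\esssup_\tau R(\tau)$ is controlled by $\sup_t N_t$ via Doob's inequality, whose $L^2(\q)$-norm is bounded by $2\,\|Y^*\|_{L^2(\q)}$; combining this with the equivalence $\p\sim\q$ and the uniform $\pp$-bound closes the argument.
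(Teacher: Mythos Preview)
Your overall plan is right, and the first two moves---the pointwise domination $R(\tau)\le G(\tau):=\esssup_{\p\in\pp}E^\p[Y^*\mid\f_\tau]$ and the per-$\tau$ bound $E^{\bar\p}[G(\tau)^2]\le\sup_{\p}E^\p[(Y^*)^2]$ via Jensen and an optimizing sequence---are exactly what the paper does, and they do yield the $\q$-uniform integrability of $(R(\tau),\tau\in\s)$. The problem lies in your argument for $\esssup_{\tau}R(\tau)\in L^2(\f,\q)$.

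Your proposed workaround with $N_t:=E^\q[(Y^*)^2\mid\f_t]$ does not work. The inequality you would need is $R(\tau)^2\le N_\tau$, but all you have is $R(\tau)^2\le\esssup_{\p}E^\p[(Y^*)^2\mid\f_\tau]$ by Jensen, and this right-hand side is again a double essential supremum (a $\pp$-Snell-type object), not the single $\q$-conditional expectation $N_\tau$. Dropping the $\esssup_\p$ is precisely the difficulty you identified one sentence earlier, so you have not actually circumvented it. Moreover, even if the domination held, applying Doob's $L^2$ inequality to $N$ would require $\|N_T\|_{L^2(\q)}=\|(Y^*)^2\|_{L^2(\q)}=\|Y^*\|_{L^4(\q)}^2<\infty$, a fourth-moment bound that condition \eqref{integ cond for Y} does not provide.

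The paper closes this gap by a genuinely additional idea: it first shows that $G$ is a $\pp$-supermartingale family, fixes an arbitrary $\bar\p\in\pp$, and forms the $\bar\p$-potential $H(\tau):=G(\tau)-E^{\bar\p}[Y^*\mid\f_\tau]$. One checks $\esssup_\tau H(\tau)\in L^2(\f,\bar\p)$ (this follows from the per-$\tau$ bound above, since $0\le H(\tau)\le G(\tau)$), so the Doob--Meyer--Mertens decomposition for supermartingale \emph{systems} (Theorem~3.1 of \cite{g-sup systems decomposition}) applies to $H$ under the \emph{single} measure $\bar\p$ and yields $H(\tau)=E^{\bar\p}[B_T\mid\f_\tau]-B_\tau$ with $B_T\in L^2(\f,\bar\p)$. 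This gives the crucial single-martingale domination $G(\tau)\le E^{\bar\p}[Y^*+B_T\mid\f_\tau]$, and now Doob's (or BDG) inequality applied to this one RCLL square-integrable $\bar\p$-martingale delivers $E^{\bar\p}[(\esssup_\tau R(\tau))^2]<\infty$. In short, the missing ingredient in your proposal is this preliminary Doob--Meyer step on the auxiliary potential $H$, which is what converts the intractable double $\esssup$ into a bound by a single martingale.
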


\begin{proof}
Let us show that $\esssup\limits_{\tau \in \s} R(\tau) \in L^2(\f, \q)$. Actually, condition \eqref{integ cond for Y} gives even more; we shall prove that $E^\p[(\esssup\limits_{\tau \in \s} R(\tau))^2]<\infty$, for any probability measure $\p \in \pp$. Indeed, let $\bar{Y}$ denote the random variable $\esssup\limits_{\sigma \in \s} Y(\sigma)$. Notice that we have a.s.
\begin{align*}
    R(\tau)= \esssup\limits_{\p \in \pp} \esssup\limits_{\sigma \in \s_\tau} E^\p[ Y(\sigma) \vert \f_\tau] 
    \leq  \esssup\limits_{\p \in \pp}  E^\p[ \bar{Y} \vert \f_\tau] =: G(\tau), \quad \forall \tau \in \s. 
\end{align*}
 We first show that the family $\mathcal{G}=(G(\tau),\tau \in \s)$ is a square-integrable $\pp$-supermartingale family, with $G(T)=\bar{Y}$. It is easy to check that $\lbrace E^\p[\bar{Y} \vert \f_\tau], \p \in \pp \rbrace$, for each $\tau \in \s$, and  $\lbrace E^\p[\bar{Y}^2 \vert \f_v]/ v\in \s, \p \in \pp \rbrace$ are closed under pairwise maximisation. Fix an arbitrary $\bar{\p} \in \pp$. There exists a sequence $\lbrace \p_n \rbrace_n$ of probability measures in $\pp$, such that $G(\tau)=\lim\limits_{n \rightarrow \infty} \uparrow E^{\p_n}[\bar{Y} \vert \f_\tau]$ a.s. Without loss of generality, we can take $\p_n=\bar{P}$ on $\f_\tau$, $\forall n$. Hence, by monotone convergence we get, for each $\tau, v\in \s$ s.t. $T\geq \tau \geq v \geq 0$
 \begin{align*}
     E^{\bar{\p}}[G(\tau) \vert \f_v] &= E^{\bar{\p}} \Big [ \lim\limits_{n \rightarrow \infty} \uparrow E^{\p_n} [\bar{Y} \vert \f_\tau] {\Big \vert} \f_v \Big]
     = \lim\limits_{n \rightarrow \infty} \uparrow E^{\bar{\p}} \Big[ E^{\p_n}[\bar{Y} \vert \f_\tau] {\Big \vert} \f_v \Big] \\
     &= \lim\limits_{n \rightarrow \infty} \uparrow E^{\p_n} \Big [ E^{\p_n}[\bar{Y} \vert \f_\tau] {\Big \vert} \f_v \Big ] = \lim\limits_{n \rightarrow \infty} \uparrow E^{\p_n}[\bar{Y} \vert \f_v] \\
     &\leq \esssup\limits_{\p \in \pp}  E^\p[ \bar{Y} \vert \f_v] = G(v) \quad \text{a.s.,}
 \end{align*}
and thus the “supermartingale" property of $\mathcal{G}$ holds. In the same manner, we obtain the square-integrable property;
\begin{align*}
     E^{\bar{\p}}[G(\tau)^2] &= E^{\bar{\p}} \Big [ \lim\limits_{n \rightarrow \infty}  (E^{\p_n}[\bar{Y} \vert \f_\tau])^2 \Big]
     \leq E^{\bar{\p}} \Big [ \liminf\limits_{n \rightarrow \infty}  E^{\p_n}[(\bar{Y}^2) \vert \f_\tau] \Big] \\
     &\leq \liminf\limits_{n \rightarrow \infty}  E^{\bar{\p}} \Big [  E^{\p_n}[(\bar{Y})^2 \vert \f_\tau] \Big] = \liminf\limits_{n \rightarrow \infty}  E^{\p_n}  \Big [E^{\p_n}[(\bar{Y})^2 \vert \f_\tau] \Big]  \\
     &\leq \sup\limits_{\p \in \pp}  E^\p[ (\bar{Y})^2] < \infty,
 \end{align*}
using Fatou’s lemma, Jensen’s inequality, and the condition \eqref{integ cond for Y}. Then, let us consider the following $\bar{\p}$-supermartingale family 
\begin{equation}
    H(\tau):= G(\tau) - E^{\bar{\p}}[G(T) \vert \f_\tau], \quad \tau \in \s.
\end{equation}
Notice that for each $\tau\in \s$, $0\leq H(\tau) \leq G(\tau)$. Thus, we have   
\begin{align*}
    H(\tau)^2 \leq G(\tau)^2 \leq \liminf\limits_{n \rightarrow \infty}  E^{\p_n}[(\bar{Y})^2 \vert \f_\tau] \leq \esssup\limits_{\p \in \pp} E^{\p} [(\bar{Y})^2 \vert \f_\tau],
\end{align*}
and
\begin{align*}
     \esssup\limits_{\tau \in \s} H(\tau)^2 \leq  \esssup\limits_{\p \in \pp} \esssup\limits_{\tau \in \s} E^{\p} [(\bar{Y})^2 \vert \f_\tau] = \lim\limits_{n \rightarrow \infty} \uparrow E^{\q_n}[(\bar{Y})^2 \vert \f_{\tau_n}] \quad \text{a.s.}
\end{align*}
where $\lbrace (\tau_n, \q_n) \rbrace_{n \in \mathbb{N}}$ is a conveniently chosen sequence in $\s \times \pp$ such that $\q_n=\bar{\p}$ on $\f_{\tau_n}$, $\forall n \in \mathbb{N}$. 
We take the expectation with respect to $\bar{\p}$ and apply the monotone convergence theorem to obtain that 
\begin{align*}
    E^{\bar{\p}}[\esssup\limits_{\tau \in \s} H(\tau)^2] \leq \lim\limits_{n \rightarrow \infty} \uparrow  E^{\bar{\p}} \Big[ E^{\q_n}[(\bar{Y})^2 \vert \f_{\tau_n}] \Big] &\leq \lim\limits_{n \rightarrow \infty} \uparrow  E^{\q_n}[(\bar{Y})^2 ] \\ &\leq \sup\limits_{\p \in \pp} E^\p[(\bar{Y})^2] < \infty.
\end{align*}
Hence, the system $(H(\tau), \tau \in \s)$ is a square-integrable $\bar{\p}$-supermartingale family with the property: $\esssup\limits_{\tau \in \s} H(\tau) \in L^2(\f, \bar{\p})$. We then apply Theorem 3.1 in \cite{g-sup systems decomposition} to deduce the existence of a process $Z\in \mathbf{X}^2_{lr}$, an increasing predictable process $B$ such that $B_0=0$ and $B_T \in L^2(\f, \bar{\p})$, and of a $\bar{\p}$-martingale $N \in \mathbf{H}^2$ such that a.s.
\begin{equation}
    H(\tau)=Z_\tau= Z_0+ N_\tau - B_\tau, \quad \forall \tau \in \s.
\end{equation}
We check at once that $H(\tau)= G(\tau) - E^{\bar{\p}}[\bar{Y} \vert \f_\tau]=  E^{\bar{\p}}[B_T \vert \f_\tau]- B_\tau$, for each $\tau \in \s$ a.s., and hence 
\begin{align*}
    G(\tau) \leq E^{\bar{\p}}[\bar{Y} + B_T \vert \f_\tau], \quad \forall \tau \in \s \,\, \text{a.s.}
\end{align*}
Notice that the process $(E^{\bar{\p}}[\bar{Y} + B_T \vert \f_t])_{t\in [0,T]}$ is a RCLL square-integrable $\bar{\p}$-martingale, since $B_T \in L^2(\f, \bar{\p})$ and \eqref{integ cond for Y} holds. Therefore, by the Burkholder–Davis–Gundy inequalities, we deduce that 
\begin{align*}
    E^{\bar{\p}} \Big[(\esssup\limits_{\tau \in \s} R(\tau))^2 \Big] &\leq E^{\bar{\p}} \Big[(\esssup\limits_{\tau \in \s} E^{\bar{\p}}[\bar{Y} + B_T \vert \f_\tau])^2 \Big] \\
    &=  E^{\bar{\p}} \Big[(\sup\limits_{0 \leq t \leq T} E^{\bar{\p}}[\bar{Y} + B_T \vert \f_t])^2 \Big] < \infty, 
\end{align*}
for every $\p \in \pp$. Hence, we can use Theorem \ref{universal decompo}, which provides the desired decomposition. The proof is complete. \\
\end{proof}

We are now in position to state the main result of this section. The object is to characterize an optimal probability model using the above decomposition \eqref{decomp of R}. This is our Theorem \ref{existence of optimal model} below.
 
\begin{theorem} \label{existence of optimal model}
Assume that the set $\mathcal{H}$ is a subspace of  $\mathbf{H}^2$, that the reward family $(Y(v),v \in \s)$  is $\pp$-USCE, and that equation \eqref{integ cond for Y} holds. Then there exists a probability measure $\p^* \in \pp$ such that, for every $v \in \s$,
\begin{equation} 
E^{\p^*} \Big [Y(U^*(v)) \Big\vert \f_v \Big ]= R(v)=  \esssup_{ \p \in \pp}  E^\p \Big[Y(U^*(v)) \Big\vert \f_v \Big] \quad \text{a.s.}
 \end{equation}
Moreover, any model $\p \in \pp$ is then optimal. 
\end{theorem}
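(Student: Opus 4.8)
The plan is to read the optimal model off the universal decomposition of Proposition~\ref{universal decompo for R propo} and to reduce the whole statement to a single flatness property of the increasing process, which will turn out to be independent of the individual measure (this is what makes \emph{every} model optimal).

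First I would assemble the two ingredients. Since $Y$ is $\pp$-USCE and, by Proposition~\ref{universal decompo for R propo}, $R<\infty$, Theorem~\ref{existence of optimal stopping time} applies: for every $v\in\s$ the stopping time $U^*(v)$ is optimal for $R(v)$ and $R(U^*(v))=Y(U^*(v))$ a.s. Proposition~\ref{universal decompo for R propo} supplies the decomposition $R(\tau)=X_0+M_\tau-C_\tau$, where $M\in\mathbf{H}^2$ is a RCLL $\pp$-martingale and $C$ is optional, non-decreasing, RCLL with $C_0=0$. The decisive feature is that $M$ is a martingale under \emph{every} $\p\in\pp$; hence, evaluating the decomposition at $U^*(v)$ and at $v$ and using $R(U^*(v))=Y(U^*(v))$, one gets for each $\p\in\pp$
\begin{align*}
E^\p\big[Y(U^*(v))\,\big|\,\f_v\big]=E^\p\big[R(U^*(v))\,\big|\,\f_v\big]=R(v)-E^\p\big[C_{U^*(v)}-C_v\,\big|\,\f_v\big]\quad\text{a.s.}
\end{align*}

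From this identity the entire statement collapses to one flatness claim. Indeed, $\p$ is optimal at $v$ if and only if $E^\p[C_{U^*(v)}-C_v\mid\f_v]=0$; as $C$ is non-decreasing and all measures in $\pp$ are equivalent, this is equivalent to the \emph{measure-free} event $C_{U^*(v)}=C_v$ a.s. This already explains the last assertion: as soon as one model is optimal, the common process $C$ is flat from $v$ to $U^*(v)$, whence \emph{every} $\p\in\pp$ is optimal. Taking the essential supremum over $\p$ in the displayed identity and invoking the optimality $R(v)=\esssup_{\p\in\pp}E^\p[Y(U^*(v))\mid\f_v]$ of Theorem~\ref{existence of optimal stopping time}, I obtain
\begin{align*}
\essinf_{\p\in\pp}E^\p\big[C_{U^*(v)}-C_v\,\big|\,\f_v\big]=0.
\end{align*}
Thus proving the theorem amounts to showing that this essential infimum is \emph{attained}; since the integrand $C_{U^*(v)}-C_v$ is non-negative, any attaining $\p^*$ forces $C_{U^*(v)}=C_v$ a.s. The attainment is exactly where the hypothesis that $\mathcal{H}$ be a subspace enters, through the closedness of $\z$ in $\mathbf{H}^2$ recorded in the proof of Theorem~\ref{universal decompo}: using the pairwise-maximization property I would take an optimizing sequence $(\p_n)\subset\pp$ with $E^{\p_n}[C_{U^*(v)}-C_v\mid\f_v]\downarrow 0$, normalized so the densities $Z^{\p_n}$ agree on $[0,v]$, use the square-integrability furnished by \eqref{integ cond for Y} to extract an $\mathbf{H}^2$-limit $Z^*\in\z$ (i.e. some $\p^*\in\pp$), and pass to the limit in the conditional expectation. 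Concretely, the natural candidate is the measure $\p^*$ with density $Z^{\p^*}=\mathcal{E}(K)$, where $K\in\mathcal{H}$ is the component produced in the proof of Theorem~\ref{universal decompo}: under $\p^*$, Girsanov's theorem together with $\langle M,K\rangle=0$ and \eqref{increasing} (taken with $H=K$) rewrites the decomposition as the $\p^*$-Doob--Meyer form $R=X_0+(M+\widetilde K)-(A^\q-\langle K\rangle)$ with $A^\q-\langle K\rangle$ non-decreasing, so that optimality of $\p^*$ is equivalent to the flatness of $A^\q-\langle K\rangle$ on $[v,U^*(v)]$.

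The main obstacle is precisely this attainment/flatness step. Heuristically $C$ should charge only the contact set $\{R=Y\}$, which $[v,U^*(v))$ avoids, so that $C$ is flat there; but in the robust setting $R$ is \emph{not} the Snell envelope of any single $\p$, so the classical single-measure argument is unavailable and one must instead exploit the minimality of $R$ as the smallest $\pp$-supermartingale family dominating $Y$, together with the closedness of $\z$ to prevent an optimizing sequence of models from escaping to a degenerate limit. Ruling out a residual jump of $C$ at $U^*(v)$ and justifying the passage to the limit in the densities (via the uniform $\mathbf{H}^2$-control from \eqref{integ cond for Y}) are the genuinely delicate points; once $C_{U^*(v)}=C_v$ is established, the optimality of $\p^*$ and of every $\p\in\pp$ follows by the reduction above, with no further work.
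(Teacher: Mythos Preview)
Your reduction is exactly the paper's: derive
\[
E^\p\big[Y(U^*(v))\,\big|\,\f_v\big]=R(v)-E^\p\big[C_{U^*(v)}-C_v\,\big|\,\f_v\big],
\]
observe that $\essinf_{\p\in\pp}E^\p[C_{U^*(v)}-C_v\mid\f_v]=0$, and note that the whole theorem follows once $C_{U^*(v)}=C_v$ a.s.\ is established (whence \emph{every} $\p$ is optimal, so no distinguished $\p^*$ is needed).

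The gap is in the flatness step, and the paper's device is much simpler than either of your two proposals. There is no need to extract an $\mathbf{H}^2$-limit of densities or to single out $\p^*=\mathcal{E}(K)$. Take a minimizing sequence $(Z^n)\subset\z$ (closure under pairwise minimization) so that
\[
E^\q\Big[\tfrac{Z^n_{U^*(v)}}{Z^n_v}\,(C_{U^*(v)}-C_v)\,\Big|\,\f_v\Big]\downarrow 0,
\]
and apply Fatou's lemma: since the integrand is non-negative,
\[
E^\q\Big[\big(\liminf_n \tfrac{Z^n_{U^*(v)}}{Z^n_v}\big)\,(C_{U^*(v)}-C_v)\,\Big|\,\f_v\Big]=0.
\]
The subspace hypothesis on $\mathcal{H}$ is invoked \emph{not} for $\mathbf{H}^2$-compactness but for the positivity lemma (cited from \cite[Lemma~2.22]{Zam}): $\essinf_{Z\in\z} Z_\sigma/Z_\tau>0$ a.s.\ for all $\sigma,\tau\in\s$. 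Hence the density factor above is strictly positive a.s., forcing $C_{U^*(v)}-C_v=0$ a.s.\ directly. That is the whole argument.

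Both of your alternative routes have genuine obstacles. For the $\mathbf{H}^2$-extraction, even if $Z^n\to Z^*$ in $\mathbf{H}^2$, passing to the limit in $E^\q[(Z^n_{U^*(v)}/Z^n_v)(C_{U^*(v)}-C_v)\mid\f_v]$ requires integrability control on the product that you do not have (the increment of $C$ is only in $L^2(\q)$, and the density ratio need not be bounded). For the $\mathcal{E}(K)$ candidate, you correctly note that under $\p^*$ the predictable increasing part becomes $A^\q-\langle K\rangle$, but you would then need this process to be flat on $[v,U^*(v)]$; since $U^*(v)$ is optimal for the \emph{$\pp$}-Snell envelope $R$ and not for the $\p^*$-Snell envelope, the classical single-measure flatness argument you yourself flag as unavailable indeed does not apply here.
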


\begin{proof}
Let $\p \in \pp$ and $v \in \s$. The conditions of Theorem \ref{existence of optimal stopping time} and Theorem \ref{universal decompo for R propo} are  fulfilled, thus
\begin{align*}
    E^\p  [R(U^*(v)) \vert \f_v  ]= E^\p  [Y(U^*(v)) \vert \f_v  ] = R(v) - E^\p [C_{U^*(v)} - C_v \vert \f_v], \quad \text{a.s.}
\end{align*}
Taking the essential supremum with respect to $\p$ yields
 \begin{equation}
     \essinf_{ \p \in \pp}  E^\p \Big [C_{U^*(v)} - C_v \Big\vert \f_v \Big] = \essinf_{ Z \in \z}  E^\q  \Big [\frac{Z_{U^*(v)}}{Z_v} (C_{U^*(v)} - C_v) \Big\vert \f_v \Big]  = 0, \quad \text{a.s.}
 \end{equation}
 The set $\lbrace E^\q  [\frac{Z_{U^*(v)}}{Z_v} (C(U^*(v)) - C_v) \vert \f_v ], Z \in \z \rbrace$ is closed under pairwise maximization. Then, once more the fundamental property of the essential infimum/supremum guarantees that there is a sequence $(Z^n)_{n\in \mathbb{N}} \subseteq \z$ such that a.s.
 \begin{equation*}
     \essinf_{ \p \in \pp}  E^\p [C_{U^*(v)} - C_v \vert \f_v]=  \lim\limits_{n \rightarrow \infty} \downarrow E^\q \Big [\frac{Z^n_{U^*(v)}}{Z^n_v} (C_{U^*(v)} - C_v) \Big\vert \f_v \Big]=0.
 \end{equation*} 
 Using Fatou's lemma, we get 
 \begin{align*}
     0 &\leq E^\q \Big[ \lim\limits_{n \rightarrow \infty} \downarrow  \frac{Z^n_{U^*(v)}}{Z^n_v} \cdot (C_{U^*(v)} - C_v) \Big\vert \f_v \Big] \\ &\leq \lim\limits_{n \rightarrow \infty} \downarrow E^\q \Big [\frac{Z^n_{U^*(v)}}{Z^n_v} (C_{U^*(v)} - C_v) \Big\vert \f_v \Big] =0.
 \end{align*}
Thus, 
\begin{equation} \label{E C-C=0}
    E^\q \Big[ \lim\limits_{n \rightarrow \infty} \downarrow  \frac{Z^n_{U^*(v)}}{Z^n_v} \cdot (C_{U^*(v)} - C_v) \Big\vert \f_v \Big] \equiv 0.
\end{equation}
Now, since $\mathcal{H}$ is closed in $\mathbf{H}^2$, it follows from \cite[Lemma 2.22.]{Zam} that, for all $\tau,\sigma \in \s$,
\begin{equation}
    \essinf_{Z \in \z} \frac{Z_\sigma}{Z_\tau} >0 \quad \text{a.s.}
\end{equation}
Therefore, $\lim\limits_{n \rightarrow \infty} \downarrow  \frac{Z^n_{U^*(v)}}{Z^n_v} >0$, and by \eqref{E C-C=0} we deduce that $C_{U^*(v)} = C_v$ a.s., since the process $C$ is increasing. By equation \eqref{decomp of R}, this leads immediately to
\begin{equation*}
    E^\p [R(v)]= X_0 + E^\p [M_v]- E^\p[C_v]= X_0 + E^\p [M_{U^*(v)}]- E^\p[C_{U^*(v)}]= E^\p [R(U^*(v))],
\end{equation*}
from which we conclude that $E^\p [Y(U^*(v)) \vert \f_v]=E^\p [R(U^*(v)) \vert \f_v]=  R(v)$, a.s. Hence $\p \equiv \p^*$ is an optimal model, and by arbitrariness of $\p$, any model is then optimal. 
\end{proof}

%
%

\begin{acks}[Acknowledgments]
The authors would like to acknowledge that Youssef Ouknine is also affiliated with Hassan II Academy of Sciences and Technologies, and Africa Business School, Mohammed VI Polytechnic University.
\end{acks}

\end{document}